\definecolor{aleacolor}{rgb}{0.16,0.59,0.78}
\renewcommand{\cite}{\citet}
\theoremstyle{plain}
\newtheorem{theorem}{Theorem}[section]                                          
\newtheorem{lemma}[theorem]{Lemma}
\theoremstyle{definition}
\theoremstyle{remark}
\makeatletter \@addtoreset{equation}{section} \makeatother
\newcommand{\cor}{\operatorname{Corr}}
\newcommand{\mean}{\mathbb{E}}
\newcommand{\var}{\mathbb{V}}
\newcommand{\prob}{\mathbb{P}}
\newcommand{\ind}{\mathbf{1}} 
\newcommand{\unif}{\textsf{Unif}}
\newcommand{\expdist}{\textsf{Exp}}
\newcommand{\geo}{\textsf{Geo}}
\begin{document}

\title[Minimum correlation for any
bivariate Geometric distribution]{Minimum correlation for any
bivariate Geometric distribution}

\author{Mark Huber}
\author{Nevena Mari\'c}

\address{Claremont McKenna College \newline
850 Columbia Avenue,\newline
Claremont, CA  91711, USA.}

\address{University of Missouri-St. Louis\newline
One University Boulevard,\newline
St. Louis, MO 63121, USA.}

\email{mhuber@cmc.edu, maricn@umsl.edu}
\urladdr{\url{http://www.cmc.edu/pages/faculty/MHuber/},\url{http://www.cs.umsl.edu/~maric}}

\thanks{Research supported by National Science Foundation and UMSL CAS Research Award}

\subjclass[2000]{60E05, 62H20.} 
\keywords{Geometric distribution, Minimum correlation.}

\begin{abstract}
  Consider a bivariate Geometric random variable where the first component
has parameter $p_1$ and the second  parameter $p_2$.  It is not possible to 
make the correlation between the marginals equal to -1.
Here the properties of
this minimum correlation are studied both numerically and analytically.  
It is shown that the minimum correlation can be computed exactly in time
$O(p_1^{-1} \ln(p_2^{-1}) + p_2^{-1} \ln(p_1^{-1}))$.  
One method for generating a bivariate geometric with
target correlation requires computing this minimum correlation.
The minimum correlation is
shown to be nonmonotonic in $p_1$ and $p_2$, moreover, the partial
derivatives are not continuous.  For $p_1 = p_2$, these discontinuities
are characterized completely and shown to lie near (1~- roots of 1/2).
In addition, we construct analytical bounds on the minimum correlation.

\end{abstract}

\maketitle




\section{Introduction}

We investigate the minimum attainable correlation between two 
Geometric random variables. 
Most students graduate believing that any correlation in $[-1,1]$ 
is attainable by a bivariate distribution.
That, of course, is not true, except for distributions with symmetric 
support like Normal and Uniform (see \cite{Moran1967a}). 
The consequence is that, in data analysis,  empirical correlation is 
often misinterpreted, and compared to $-1$ and $1$ instead to the 
theoretical bounds.  See~\cite{denuitd2003} and~\cite{shihh1992}
for a discussion.
Therefore, attainable correlation is crucial 
information about a multivariate distribution. Still, there is much 
more unknown than known facts in this field, especially in higher dimensions. 
In bivariate case, minimum correlation for several important 
distributional examples is analyzed in \cite{Conway1979} and 
\cite{dukicm2013} (and references therein). The purpose of the 
present paper is to fill the gap in this subject concerning one 
of the most important discrete cases--the Geometric distribution. 

\par Say that $X$ has a Geometric distribution with parameter $p$ ($0<p \leq 1$) and
write $X \sim \geo(p)$, 
 if
for all $i \in \{0,1,2,\ldots\}$, $\prob(X = i) = p(1-p)^i$. If one has a coin with probability $p$ of heads, then $X \sim \geo(p)$ 
represents the number of tails flipped before obtaining a heads.

For $(p_1,p_2) \in  (0,1]^2$, let
\[
\rho_{-}(p_1,p_2) = \min\{\cor(X_1,X_2):X_1 \sim \geo(p_1),X_2 \sim \geo(p_2)\}.
\]
When $p_1 = p_2 = p$, Figure~\ref{FIG:graph} shows a graph of this
minimum correlation as a function of $p$.
Several properties are immediately apparent.  First, the correlation is {\em not
a monotonic function of $p$}.  In addition, there are points of 
discontinuity in the derivative of the 
graph.  These phenomena are explained
in Section~\ref{SEC:properties}.

In Section~\ref{SEC:compute}
it is shown that the value of $\rho_{-}(p_1,p_2)$ can be found exactly
in time $O(p_1^{-1}\ln(p_1^{-1}) + p_2^{-1}\ln(p_2^{-1}))$.  In addition,
upper and lower bounds for this function are computed.

\begin{figure}[ht]
\includegraphics[width=5in]{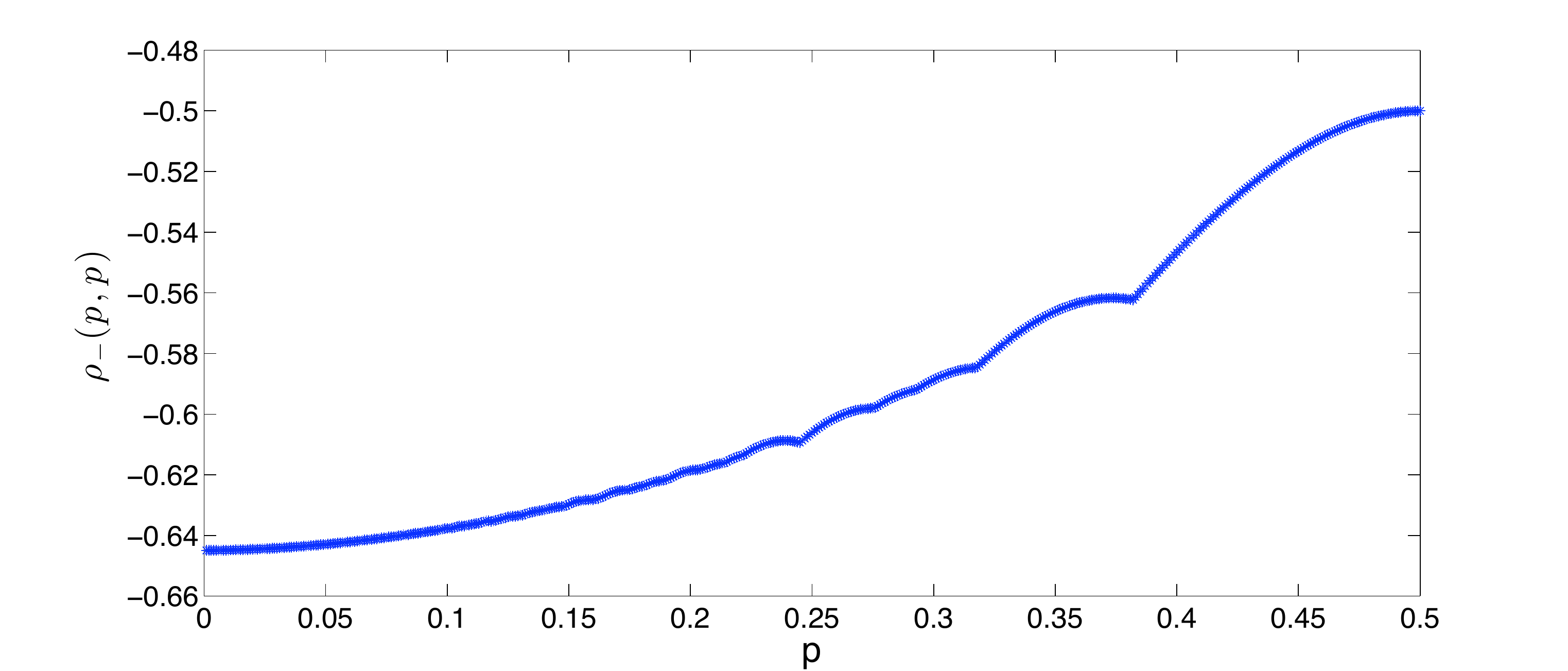} 
\caption{The minimum correlation $\rho_{-}(p,p)$ for $p<1/2$. When $p\geq 1/2$ the minimum correlation is simply equal to $p-1$.}
\label{FIG:graph}
\end{figure}

To understand $\rho_{-}$, first consider
the inverse transform method for generating a random variate with
a specified cdf (cumulative distribution function) $F$.  Define the pseudoinverse of the cdf as
\begin{equation}
F^{-1}(u) = \inf\{x:F(x) \geq u\}.
\end{equation}
When $U$ is uniform over the interval $[0,1]$ (write $U \sim \unif([0,1])$),
$F^{-1}(U)$ is a random variable with cdf $F$ 
(see for instance p.~28 of~\cite{devroye1986}).  Since $U$ and $1 - U$ 
have the same distribution, both can be used in the inverse transform
method.  The random variables $U$ and $1 - U$ are {\em antithetic} random
variables.  

We will use the notation $X \sim Y$ when $X$ has the same probability distribution as $Y$. The following result comes from work of \cite{frechet1951}
and \cite{hoeffding1940}.

\begin{lemma}[Fr\'echet-Hoeffding bound]
\label{THM:fhbound}
For $X_1$ with cdf $F_1$ and $X_2$ with cdf $F_2$,
and $U \sim \unif([0,1])$:
\[
\cor(F_1^{-1}(U),F_2^{-1}(1 - U)) \leq \cor(X_1,X_2) \leq 
  \cor(F_1^{-1}(U),F_2^{-1}(U)).
\]
Conversely, if $\cor(X_1,X_2)$ equals the minimum correlation then
it holds that $(X_1,X_2) \sim (F_1^{-1}(U),F_2^{-1}(1-U))$.  For correlation equal
to
the maximum value, $(X_1,X_2) \sim (F_1^{-1}(U),F_2^{-1}(U))$. 
\end{lemma}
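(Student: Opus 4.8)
The plan is to reduce the statement to one about covariances and then combine Hoeffding's covariance identity with the pointwise Fr\'echet--Hoeffding bounds on bivariate cdfs. Throughout we assume $X_1,X_2$ have finite and strictly positive variances (otherwise the correlation is undefined); this holds in particular for geometric marginals, since they have moments of all orders. Because $\cor(X_1,X_2)=\operatorname{Cov}(X_1,X_2)/\sqrt{\var(X_1)\var(X_2)}$ and the denominator depends only on $F_1$ and $F_2$, it suffices to bound $\operatorname{Cov}(X_1,X_2)$ over all couplings of $F_1$ and $F_2$, and a coupling is extremal for the correlation iff it is extremal for the covariance.

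First I would record the two elementary bounds on any joint cdf $H(s,t)=\prob(X_1\le s,\,X_2\le t)$ with marginals $F_1,F_2$: the upper bound $H(s,t)\le\min(F_1(s),F_2(t))=:M(s,t)$ is immediate from $\{X_1\le s,X_2\le t\}\subseteq\{X_1\le s\}\cap\{X_2\le t\}$, and the lower bound $H(s,t)\ge\max(F_1(s)+F_2(t)-1,0)=:W(s,t)$ follows from inclusion--exclusion together with nonnegativity. Next I would invoke Hoeffding's identity
\[
\operatorname{Cov}(X_1,X_2)=\int_{\R}\int_{\R}\bigl[H(s,t)-F_1(s)F_2(t)\bigr]\,ds\,dt,
\]
valid because the marginals have finite variance, so that integrating the pointwise bounds gives $\int\!\!\int[W-F_1F_2]\le\operatorname{Cov}(X_1,X_2)\le\int\!\!\int[M-F_1F_2]$.

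The next step is to identify the two bounding integrals as covariances of explicit couplings. Using the defining property of the pseudoinverse, $F^{-1}(u)\le x \iff u\le F(x)$, one computes for $U\sim\unif([0,1])$ that $\prob(F_1^{-1}(U)\le s,\,F_2^{-1}(U)\le t)=\prob(U\le F_1(s),\,U\le F_2(t))=M(s,t)$, and, since $\prob(U=c)=0$ for every constant $c$, that $\prob(F_1^{-1}(U)\le s,\,F_2^{-1}(1-U)\le t)=\prob(U\le F_1(s),\,U\ge 1-F_2(t))=W(s,t)$. Hence the two bounding integrals equal $\operatorname{Cov}(F_1^{-1}(U),F_2^{-1}(U))$ and $\operatorname{Cov}(F_1^{-1}(U),F_2^{-1}(1-U))$ respectively; dividing through by $\sqrt{\var(X_1)\var(X_2)}>0$ yields the asserted chain of inequalities, and in particular shows the minimum correlation is attained (by the antithetic coupling).

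For the converse, suppose $\cor(X_1,X_2)$ equals the minimum, so $\operatorname{Cov}(X_1,X_2)=\int\!\!\int[W-F_1F_2]$ and therefore $\int\!\!\int[H-W]=0$. Since $H-W\ge0$ everywhere, $H=W$ Lebesgue-almost everywhere on $\R^2$; since $H$ is a genuine joint cdf and $W$ is right-continuous in each coordinate (being built from the right-continuous $F_1,F_2$), approximating an arbitrary point from the upper-right by points of the full-measure set upgrades this to $H\equiv W$. A joint cdf determines the joint law, so $(X_1,X_2)\sim(F_1^{-1}(U),F_2^{-1}(1-U))$; the maximum case is identical with $W$ replaced by $M$. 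I expect the only real technical care to be needed in this last almost-everywhere-to-everywhere upgrade and in quoting Hoeffding's identity under the correct finiteness hypotheses; the rearrangement-type core — that $M$ and $W$ are the extremal joint cdfs and are realized by the comonotone and antithetic couplings — is then routine.
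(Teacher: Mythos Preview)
Your proof is correct and is the standard argument via Hoeffding's covariance identity combined with the pointwise Fr\'echet--Hoeffding bounds on the joint cdf. Note, however, that the paper does not actually prove this lemma: it is simply attributed to Fr\'echet (1951) and Hoeffding (1940) and stated without proof, so there is no ``paper's own proof'' to compare against. Your write-up supplies exactly the classical argument those references contain; the only slightly delicate step is the almost-everywhere-to-everywhere upgrade in the converse, and your right-continuity argument handles it correctly (any open rectangle to the upper-right of a given point has positive Lebesgue measure and hence meets the full-measure set where $H=W$).
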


 In other words, the maximum correlation between $X_1$ and $X_2$ is 
achieved when the same uniform is used in the inverse transform method
to generate both.  The minimum correlation between $X_1$ and $X_2$ is
achieved when antithetic random variates are used in the inverse transform
method. In the literature on dependence and copulas (see 
for instance \citealp{nelsen2007} and \citealp{denuitd2003}) $(F_1^{-1}(U),F_2^{-1}(U))$ and 
$(F_1^{-1}(U),F_2^{-1}(1-U))$ are known as the
{\em comonotonic} and {\em countermonotonic} vectors, respectively.

For $X \sim \geo(p)$, 
the expectation and variance are well 
known: $\mean(X)= (1-p)/p$ and $\var(X)= (1-p)/p^2$.  The cdf is
\[
F_p(a)=\prob(X \leq a) = p + p(1 - p) + \cdots p(1 - p)^{a} = 1 - (1 - p)^{a+1}. 
\]

\begin{lemma}
The pseudoinverse $F_p^{-1}$ of $F_p$ is
\[
F_p^{-1}(u) = \sum_{n = 1}^\infty \ind(1 - (1 - p)^n \leq u < 1 - (1 - p)^{n+1}).
\]
\end{lemma}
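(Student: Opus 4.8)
The plan is to derive everything from the explicit step‑function shape of $F_p$ together with the standard equivalence $F_p^{-1}(u)\le x \iff u\le F_p(x)$ enjoyed by the pseudoinverse of a nondecreasing right‑continuous function. Since $\prob(X\le x)=\prob(X\le\lfloor x\rfloor)$ for $X\sim\geo(p)$, the identity $F_p(a)=1-(1-p)^{a+1}$ already recorded says that $F_p(x)=1-(1-p)^{\lfloor x\rfloor+1}$ for $x\ge 0$ and $F_p(x)=0$ for $x<0$. Hence $F_p$ is nondecreasing, right‑continuous, constant on each interval $[n,n+1)$ with $n$ a nonnegative integer, and its values $F_p(n)=1-(1-p)^{n+1}$ increase to $1$.

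First I would fix $u\in(0,1)$ and note that $\{x:F_p(x)\ge u\}$ is closed and upward closed, hence of the form $[m,\infty)$, with $m=F_p^{-1}(u)$; because $F_p$ is constant on each $[n,n+1)$ and $F_p(n)\uparrow 1>u$, the integer $m$ is the least $n\ge 0$ with $1-(1-p)^{n+1}\ge u$ (and $m=0$ precisely when $u\le p$). Unwinding this minimality, $F_p^{-1}(u)=n$ for $n\ge 1$ exactly when $1-(1-p)^n<u\le 1-(1-p)^{n+1}$, i.e.\ exactly when $u$ lies in the $n$‑th of the consecutive subintervals into which these endpoints cut $(p,1)$. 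Finally I would write the nonnegative integer $F_p^{-1}(u)$ as $\sum_{n\ge 1}\ind(F_p^{-1}(u)\ge n)$ and substitute the equivalence $\{F_p^{-1}(u)\ge n\}=\{u> 1-(1-p)^n\}$ obtained in the previous step; this yields the claimed series, and convergence is immediate since for $u<1$ one has $1-(1-p)^n>u$ for all large $n$, so only finitely many indicators are nonzero.

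The computation is routine, and the only point I would handle with real care — the ``main obstacle,'' such as it is — is the behaviour at the interval endpoints: with the convention $F_p^{-1}(u)=\inf\{x:F_p(x)\ge u\}$ the set on which $F_p^{-1}$ equals $n$ is the half‑open interval closed at the right, $\big(1-(1-p)^n,\,1-(1-p)^{n+1}\big]$, so one must keep the strict/non‑strict inequalities straight. This endpoint choice is invisible in every use made of the lemma, since there $U\sim\unif([0,1])$ is continuous and $\prob\big(U\in\{1-(1-p)^n:n\ge 1\}\big)=0$. The degenerate case $p=1$ should also be checked separately: then $F_1\equiv 1$ on $[0,\infty)$, so $F_1^{-1}\equiv 0$ on $(0,1]$, matching the empty sum. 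As a consistency check, $\int_0^1 F_p^{-1}(u)\,du=\sum_{n\ge 1}(1-p)^n=(1-p)/p=\mean(X)$, exactly as required for $X\sim\geo(p)$.
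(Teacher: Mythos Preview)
Your argument is correct and follows the same route as the paper's one-line proof: both verify directly that $F_p^{-1}(u)=n$ exactly when $u$ lies between the consecutive cdf values $1-(1-p)^{n}$ and $1-(1-p)^{n+1}$. You are right that under the $\inf$ convention the intervals should be $\bigl(1-(1-p)^{n},\,1-(1-p)^{n+1}\bigr]$ rather than as printed, and that this is immaterial for uniform $U$; note too that the displayed sum, read literally, is $\{0,1\}$-valued (a factor of $n$ is missing), so the layer-cake form $\sum_{n\ge1}\ind\bigl(u>1-(1-p)^{n}\bigr)$ you derive is in fact the correct expression rather than a mere rewriting of the stated one.
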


[Here $\ind(\text{expression})$ is the indicator function that evaluates
to 1 when the Boolean expression in the argument is true, and is 0 otherwise.]

\begin{proof}
As the cdf of $X$ is $1 - (1 - p)^{a+1}$, 
for $u \in [1 - (1 - p)^n \leq u < 1 - (1 - p)^{n+1}]$,
it holds that $\prob(X \leq n) \geq u$ and $\prob(X \leq n - 1) < u$.  
\end{proof}

\paragraph{Prior Work}

Several authors have studied the construction of bivariate geometric
distributions.  
\cite{downton1970} created such a distribution as a means to
create a bivariate exponential for 
reliability applications
where two processes are receiving shocks in a memoryless correlated 
fashion.
\cite{hawkes1972} generalized Downton's family as follows.
Consider a bivariate Bernoulli distribution
$(A,B)$ where for all $i$ and $j$ in $\{0,1\}$:
\[
\prob(A = i,B = j) = p_{ij}.
\]
Then if $(A_i,B_i)$ are an iid sequence of draws from this distribution
 for $i \in \{1,2,3,\ldots\}$, let $X_1 = \min\{i:A_{i+1} = 1\}$,
$X_2 = \min\{i:B_{i+1} = 1\}$.  It is easy to show that this gives
$X_1 \sim \geo(p_{10} + p_{11}),$ $X_2 \sim \geo(p_{01} + p_{11})$.  

\cite{marshallo1985} then showed that 
the geometrics obtained in this fashion have a minimum correlation 
of at least $-1/4$.

\cite{paulsonu1972} built a 
bivariate distribution by taking advantage of a recursive formulation
of the geometric from~\cite{uppulurifs1967}.  They do not analyze
the minimum correlation, only showing that their family of distributions
is not rich enough to include the case that the components are
independent.

In~\cite{dukicm2013} (and see also~\citealp{hubertoappeard}), it is shown how
to simulate a bivariate Geometric distribution that attains 
any value between the maximum and minimum correlation, although these
methods require knowledge of the maximum and minimum correlation.

Therefore our 
first main result concerns computation of the minimum correlation.
Since the bivariate geometric distribution has infinite support, it is important
to note the minimum correlation can be computed relatively quickly.
\begin{theorem}
\label{THM:first}
  The minimum correlation between $X_1 \sim \geo(p_1)$ and 
  $X_2 \sim \geo(p_2)$ can be computed in time 
  $O(p_1^{-1}\ln(p_2^{-1}) + p_2^{-1}\ln(p_1^{-1}))$.
\end{theorem}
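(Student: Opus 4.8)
The plan is to use the Fr\'echet--Hoeffding bound to pin down the minimizing joint law, to rewrite the resulting correlation as a single one--dimensional integral, and then to argue that only finitely many pieces of the integrand contribute, with the number of pieces being exactly of the claimed order.

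By Lemma~\ref{THM:fhbound} the minimum is attained by the countermonotonic coupling, so
\[
\rho_{-}(p_1,p_2)=\cor\bigl(F_{p_1}^{-1}(U),F_{p_2}^{-1}(1-U)\bigr)
=\frac{\mean\!\bigl[F_{p_1}^{-1}(U)\,F_{p_2}^{-1}(1-U)\bigr]-\mean(X_1)\mean(X_2)}{\sqrt{\var(X_1)\var(X_2)}},
\]
and, since the means and variances of $X_i\sim\geo(p_i)$ are explicit, the whole task reduces to computing
\[
J=\int_0^1 F_{p_1}^{-1}(u)\,F_{p_2}^{-1}(1-u)\,du .
\]
I would then feed in the formula for $F_p^{-1}$: the map $u\mapsto F_{p_1}^{-1}(u)$ is a nondecreasing step function equal to $0$ on $[0,p_1)$ and jumping up by $1$ at each $1-(1-p_1)^m$ ($m\ge 1$), while $u\mapsto F_{p_2}^{-1}(1-u)$ is nonincreasing, equal to $0$ on $(1-p_2,1]$ and jumping down by $1$ at each $(1-p_2)^\ell$ ($\ell\ge 1$). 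Hence the integrand is supported on $[p_1,1-p_2]$.

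If $p_1+p_2\ge 1$ this interval is a null set, so $J=0$ and $\rho_{-}(p_1,p_2)=-\sqrt{(1-p_1)(1-p_2)}$, computable in $O(1)$ time (this also recovers the value $p-1$ of Figure~\ref{FIG:graph} when $p_1=p_2=p\ge 1/2$). When $p_1+p_2<1$, the breakpoints of the first step function lying in $[p_1,1-p_2]$ are $1-(1-p_1)^m$ for $m=1,\dots,M$, where $M$ is the largest integer with $(1-p_1)^M\ge p_2$, and those of the second are $(1-p_2)^\ell$ for $\ell=1,\dots,L$, where $L$ is the largest integer with $(1-p_2)^L\ge p_1$. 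The elementary inequality $-\ln(1-x)\ge x$ gives $M\le \ln(p_2^{-1})/\ln\bigl((1-p_1)^{-1}\bigr)\le p_1^{-1}\ln(p_2^{-1})$ and, symmetrically, $L\le p_2^{-1}\ln(p_1^{-1})$. Both breakpoint lists come already sorted, so I would merge them (together with the endpoints $p_1$ and $1-p_2$) in $O(M+L)$ time into a common refinement of $[p_1,1-p_2]$; on each cell both step functions are constant, so $J$ is the sum over cells of (cell width)$\,\times\,$(value of $F_{p_1}^{-1}$)$\,\times\,$(value of $F_{p_2}^{-1}(1-\cdot)$), the two values being maintained incrementally during a left-to-right sweep. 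This yields $J$, hence $\rho_{-}(p_1,p_2)$, in $O(M+L)=O\bigl(p_1^{-1}\ln(p_2^{-1})+p_2^{-1}\ln(p_1^{-1})\bigr)$ arithmetic operations.

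The step I expect to require the most care is the estimate on the numbers $M$ and $L$ of relevant breakpoints: this is where the logarithmic factors enter, and it rests both on $-\ln(1-x)\ge x$ and on correctly handling the (measure-zero) boundary behaviour at $p_1$ and $1-p_2$, which must be checked not to affect $J$. A secondary point is the computational model: the stated bound counts real-arithmetic operations, including the logarithms used to locate $M$ and $L$ (or, equivalently, the $O(M)$ and $O(L)$ successive multiplications that find them), which is the natural model here since $\rho_{-}$ is itself a transcendental function of $(p_1,p_2)$.
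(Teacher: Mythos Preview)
Your proposal is correct and follows essentially the same route as the paper: reduce via the Fr\'echet--Hoeffding bound to the integral $J=\mean[F_{p_1}^{-1}(U)F_{p_2}^{-1}(1-U)]$, observe that both factors are step functions with finitely many breakpoints in $[p_1,1-p_2]$, bound the breakpoint counts by $d_2=\lfloor\ln(p_2)/\ln(1-p_1)\rfloor$ and $d_1=\lfloor\ln(p_1)/\ln(1-p_2)\rfloor$, and merge the two sorted lists in linear time. Your use of $-\ln(1-x)\ge x$ to justify $d_1+d_2=O(p_1^{-1}\ln(p_2^{-1})+p_2^{-1}\ln(p_1^{-1}))$ is actually more explicit than the paper, which simply asserts this estimate.
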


Our second main result is a proof of certain properties of the 
function $\rho_{-}(p,p)$.
\begin{theorem}
\label{THM:second}
Let $\rho_{-}(p)$ be the minimum correlation achieved between $X_1$ 
and $X_2$ where both are $\geo(p)$.  Then the following is true.
\begin{enumerate}
\item{There is an infinite
number of points where $(d/dp)\rho_{-}(p)$ is discontinuous.}
\item{The points where the discontinuities occur are 
near to  $(1$~- roots of $1/2)$.}
\item{The function is upper and lower bounded by:
\[
g(p) - p \leq
  \rho_{-}(p) \leq g(p)
\]
where
\[
g(p) = \frac{p^2}{[\ln(1-p)]^2} \cdot \frac{1}{1-p}\left(2 - \frac{\pi^2}{6}
 \right) - (1 - p).
\]
}
\end{enumerate}
\end{theorem}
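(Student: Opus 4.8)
The plan is to reduce the whole statement to the study of the single series
\begin{equation*}
  S(p)\;=\;\sum_{i\ge1}\sum_{j\ge1}\bigl((1-p)^{i}+(1-p)^{j}-1\bigr)^{+},
\end{equation*}
the point being that $\rho_{-}(p)=\tfrac{p^{2}}{1-p}\,S(p)-(1-p)$. Indeed, by Lemma~\ref{THM:fhbound} the minimum is attained by the countermonotonic pair $(F_{p}^{-1}(U),F_{p}^{-1}(1-U))$; since $F_{p}^{-1}(U)\ge i$ exactly when $U\ge 1-(1-p)^{i}$ and $F_{p}^{-1}(1-U)\ge j$ exactly when $U\le(1-p)^{j}$, the identity $\mean(X_{1}X_{2})=\sum_{i,j\ge1}\prob(X_{1}\ge i,X_{2}\ge j)$ gives $\mean(X_{1}X_{2})=S(p)$, and the closed form for $\rho_{-}$ follows from $\mean X=(1-p)/p$ and $\var X=(1-p)/p^{2}$. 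From now on write $q=1-p$.

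For (1) and (2), observe that for each fixed $q\in(0,1)$ the inequality $q^{i}+q^{j}>1$ forces $i,j<\log_{q}(1-q)$, so only finitely many terms of $S$ are nonzero; hence on every maximal $q$-interval on which the active set $R(q)=\{(i,j):q^{i}+q^{j}>1\}$ is constant, $S$ is a finite sum of powers of $q$, i.e.\ a polynomial, and $S$ itself is continuous everywhere because a pair enters $R(q)$ only at a $q$ with $q^{a}+q^{b}=1$, at which the new summand vanishes. At such a transition point $q\mapsto q^{a}+q^{b}$ is strictly increasing through $1$, so $S'$ jumps by $aq^{a-1}+bq^{b-1}>0$ (doubled if $a\ne b$). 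Since, as a function of $q$, $\rho_{-}$ equals a strictly positive real-analytic function times $S$ plus a real-analytic function, $\rho_{-}'$ is discontinuous at exactly these points, and because every jump is strictly positive there is no cancellation. The diagonal transitions occur at $q=2^{-1/k}$, $k\ge1$ --- all distinct and in $(0,1)$ --- which proves (1); in the variable $p$ these are $p=1-2^{-1/k}$, i.e.\ one minus the $k$-th root of $1/2$. For (2) the discontinuity set is characterized exactly as $\{p:(1-p)^{a}+(1-p)^{b}=1\text{ for some }a,b\ge1\}$, the diagonal members being precisely the points $1-2^{-1/k}$; for a general pair with $a\le b$ the solution of $q^{a}+q^{b}=1$ satisfies $q^{a}\ge\tfrac12\ge q^{b}$, so it lies in $[2^{-1/a},2^{-1/b}]$, and expanding $q^{a}+q^{b}-1$ about $q=2^{-1/k}$ shows that the transitions coming from pairs with $a,b$ near $k$ all lie within $O(k^{-2})$ of $2^{-1/k}$, which is small compared with the gap $2^{-1/(k+1)}-2^{-1/k}$ between consecutive roots of $1/2$. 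Controlling this ``staircase'' near a transition is the part of (2) that takes the most work.

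For (3) the idea is to compare $S(p)$ with the integral of $h(x,y)=(q^{x}+q^{y}-1)^{+}$, which is nonnegative, decreasing in each variable and (a maximum of convex functions) convex. The substitution $u=q^{x}$, $v=q^{y}$ yields
\begin{equation*}
  \int_{[0,\infty)^{2}}h\,dx\,dy
  \;=\;\frac{1}{(\ln q)^{2}}\int_{\{u,v\in(0,1),\,u+v>1\}}\frac{u+v-1}{uv}\,du\,dv
  \;=\;\frac{2-\pi^{2}/6}{(\ln q)^{2}},
\end{equation*}
the last equality reducing, after an elementary integration, to $\int_{0}^{1}\tfrac{w\ln w}{1-w}\,dw=1-\pi^{2}/6$. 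Because $h$ is decreasing, $h(i,j)\le\int_{[i-1,i]\times[j-1,j]}h\,dx\,dy$, so summing over $i,j\ge1$ gives $S(p)\le\int_{[0,\infty)^{2}}h$; multiplying by $p^{2}/(1-p)$ and subtracting $1-p$ is exactly the upper bound $\rho_{-}(p)\le g(p)$. The lower bound is equivalent to $\int_{[0,\infty)^{2}}h-S(p)\le q/p$, and this is the genuinely delicate step: the monotone comparison alone overshoots by roughly a factor $2$, so one must write the exact inner sums $\sum_{j}h(i,j)$ in terms of the cutoffs $m_{i}=\lfloor\log_{q}(1-q^{i})\rfloor$, use convexity of $h$ to sharpen the cell-by-cell estimate, and carefully bound the resulting floor corrections, showing that what remains is at most $\sum_{i\ge1}q^{i}=q/p$. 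Multiplying by $p^{2}/(1-p)$ then gives $\rho_{-}(p)\ge g(p)-p$.

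The two steps I expect to be genuinely delicate are the sharp lower bound in (3) --- matching the constant $q/p$ rather than the $2q/p$ a naive comparison produces --- and the precise localization of the derivative discontinuities in (2); by contrast part (1) and the upper bound in (3) are almost immediate once the representation $\rho_{-}(p)=\tfrac{p^{2}}{1-p}S(p)-(1-p)$ is in hand.
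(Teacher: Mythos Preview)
Your tail--sum representation $\mathbb{E}[X_1X_2]=S(p)=\sum_{i,j\ge1}(q^{i}+q^{j}-1)^{+}$ is correct and gives a clean way to organize parts (1), (2), and the upper half of (3).  For (1)--(2) this is equivalent to what the paper does: the paper tracks the interval decomposition of $[0,1]$ by the points $\alpha_i=1-q^{i}$ and $\beta_j=q^{j}$, and its discontinuity criterion $\alpha_i=\beta_j$ is exactly your $q^{i}+q^{j}=1$.  Your upper bound in (3) via $h(i,j)\le\int_{[i-1,i]\times[j-1,j]}h$ is also fine; the paper obtains the same bound by the equally elementary observation that dropping the floors in $\int_0^1\lfloor\log_q(1-u)\rfloor\lfloor\log_q u\rfloor\,du$ can only increase the integrand.

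Where your plan diverges substantially is the \emph{lower} bound in (3).  You propose to show $\int h-S(p)\le q/p$ by a sharpened sum/integral comparison using convexity and floor corrections, and you flag this yourself as the delicate step.  The paper sidesteps that analysis entirely with a short coupling argument: if $A_i$ is an independent $\mathrm{Exp}(-\ln q)$ variable conditioned to lie in $[0,1]$, then $X_i+A_i\sim\mathrm{Exp}(-\ln q)$, and since the minimum correlation between \emph{any} two exponentials is the constant $1-\pi^{2}/6$, one gets $\mathbb{E}[(X_1+A_1)(X_2+A_2)]\ge(2-\pi^{2}/6)/(\ln q)^{2}$.  Expanding the left side and using $\mathbb{E}[A_i]\le\tfrac12$ immediately gives a lower bound on $\mathbb{E}[X_1X_2]$ and hence on $\rho_{-}$.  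This is a few lines and requires no convexity bookkeeping; it is worth adopting in place of your sketched argument, whose details you have not supplied and which, even if they go through, would be considerably longer.  (A small caveat: the paper's own algebra here carries an extra $-\tfrac14$ from $\mathbb{E}[A_1]\mathbb{E}[A_2]$, which after normalization contributes $-p^{2}/(4q)$ to the lower bound; the clean statement $g(p)-p\le\rho_{-}(p)$ in the theorem absorbs or ignores this lower--order term.)

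On (2), both your version and the paper's are qualitative.  The paper's precise statement is just that the positive root of $x^{i}(1+x^{c})=1$ lies in $[(1/2)^{1/i},(1/2)^{1/(i+c)}]$, proved by the intermediate value theorem; your $O(k^{-2})$ localization is sharper but is asserted rather than derived, so if you keep it you should supply the expansion.
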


To bound $\cor(F_p^{-1}(U), F_p^{-1}(1-U))$, the minimum correlation, the key
is computing 
$\mean(F_p^{-1}(U)F_p^{-1}(1-U))$.   Section~\ref{SEC:compute}
looks at finding this quantity for various values of $p$. Some computational details are left for the Appendix, Section~\ref{SEC:appendix}.
Section~\ref{SEC:properties} then proves an upper and lower bound on
the $\rho_{-}(p)$ function, as well as the asymptotic behavior of the 
``bumps'' in the function.

\section{Computing the minimum correlation}
\label{SEC:compute}

For simplicity consider first the case that $p = p_1 = p_2$.

For any bivariate random variables with the same marginal distributions, the maximum correlation is always 1.
More interesting is the minimum
correlation.  For geometric marginals, the minimum correlation
is markedly different 
when $p < 1/2$ and when $p \geq 1/2$.

\begin{lemma}
  Let $\rho_{-}(p)$ be the minimum correlation achievable between
  $X_1$ and $X_2$ where both are $\geo(p)$.
  It is possible to compute $\rho_{-}(p)$ in 
  $O(p^{-1} \ln(p^{-1}))$ steps.
\end{lemma}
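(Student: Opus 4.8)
The plan is to reduce the computation to a finite sum via the Fr\'echet--Hoeffding bound and then bound the number of terms in that sum. By Lemma~\ref{THM:fhbound}, the minimum correlation is attained by the countermonotonic pair, so $\rho_{-}(p) = \cor(F_p^{-1}(U),F_p^{-1}(1-U))$ with $U \sim \unif([0,1])$. Both coordinates of this pair are $\geo(p)$ (since $1-U \sim \unif([0,1])$), and the mean and variance of $X \sim \geo(p)$ are known in closed form, so
\[
\rho_{-}(p) = \frac{\mean\!\left(F_p^{-1}(U)\,F_p^{-1}(1-U)\right) - \mean(X)^2}{\var(X)} .
\]
Thus it suffices to compute $m(p) := \mean\!\left(F_p^{-1}(U)F_p^{-1}(1-U)\right) = \int_0^1 F_p^{-1}(u)\,F_p^{-1}(1-u)\,du$ within the stated time budget; the remaining arithmetic is $O(1)$.

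Next I would exploit the explicit formula for $F_p^{-1}$ from the preceding lemma. The maps $u\mapsto F_p^{-1}(u)$ and $u\mapsto F_p^{-1}(1-u)$ are nonnegative integer-valued step functions, the first nondecreasing and the second nonincreasing. Since $F_p^{-1}(u)=0$ for $u\in[0,p)$ and $F_p^{-1}(1-u)=0$ for $u\in(1-p,1]$, the integrand is supported on $[p,1-p]$, which has positive length only when $p<1/2$ (when $p\ge 1/2$ one gets $m(p)=0$ and $\rho_{-}(p)=p-1$, consistent with the figure). On $[p,1-p]$ the integrand is piecewise constant with breakpoints contained in $\{1-(1-p)^n:n\ge 1\}\cup\{(1-p)^m:m\ge 1\}$, and $m(p)$ is the sum, over the maximal subintervals between consecutive breakpoints, of (value of $F_p^{-1}(u)$)$\times$(value of $F_p^{-1}(1-u)$)$\times$(length of that subinterval).

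Then I would bound the number of breakpoints. The point $1-(1-p)^n$ lies in $[p,1-p]$ exactly when $(1-p)^n\ge p$, i.e.\ $n\le \ln p/\ln(1-p)$; using $-\ln(1-p)\ge p$ this gives $n\le p^{-1}\ln(p^{-1})$, and the identical bound governs the indices $m$ with $(1-p)^m\in[p,1-p]$. Hence there are $O(p^{-1}\ln(p^{-1}))$ breakpoints, and they can be enumerated in increasing order by a single sweep from $p$ to $1-p$: maintain the current values of $F_p^{-1}(u)$ and $F_p^{-1}(1-u)$, repeatedly advance $u$ to the smaller of the next ``up'' breakpoint $1-(1-p)^{n+1}$ and the next ``down'' breakpoint $(1-p)^{m}$, increment the appropriate value, and accumulate the corresponding contribution to $m(p)$. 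Each step is $O(1)$ work, so $m(p)$ and therefore $\rho_{-}(p)$ are obtained in $O(p^{-1}\ln(p^{-1}))$ steps.

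The main obstacle is the breakpoint count together with the bookkeeping: one must check the clean inequality $\ln p/\ln(1-p)\le p^{-1}\ln(p^{-1})$ (so that the sweep genuinely terminates within budget) and handle the half-open interval conventions in $F_p^{-1}$ carefully, so that no subinterval is missed and no endpoint contribution is double counted. Everything else---writing out $\mean(X)=(1-p)/p$, $\var(X)=(1-p)/p^2$, and assembling the final quotient---is routine.
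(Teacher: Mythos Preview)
Your proposal is correct and follows essentially the same route as the paper: reduce via the Fr\'echet--Hoeffding bound to computing $\mean[F_p^{-1}(U)F_p^{-1}(1-U)]$, dispose of the $p\ge 1/2$ case where the product is identically zero, and for $p<1/2$ count the breakpoints $\{1-(1-p)^n\}\cup\{(1-p)^m\}$ in $[p,1-p]$ as $O(\ln p/\ln(1-p)) = O(p^{-1}\ln(p^{-1}))$, then sweep. Your write-up is in fact a bit more explicit than the paper's---you spell out the merge-sweep and verify the inequality $\ln p/\ln(1-p)\le p^{-1}\ln(p^{-1})$ via $-\ln(1-p)\ge p$, whereas the paper simply asserts the bound and defers the bookkeeping to an appendix.
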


\begin{proof}
  As in the introduction, let $U \sim \unif([0,1])$, $X_1 = F_p^{-1}(U)$
  and $X_2 = F_p^{-1}(1 - U).$

  $\bullet$ Consider the $p \geq 1/2$ case.  Then either
  $U$ or $1 - U$ falls in the interval $[0,p]$ so either $X_1$ or $X_2$ is 0.
  Hence $\mean(F_p^{-1}(U)F^{-1}_p(1-U)) = 0$ and
  \[
  \rho_{-}(p) = \frac{0 - [(1 - p)/p]^2}{(1-p)/p^2} = p - 1.
  \]

  $\bullet$ Next suppose $p < 1/2$.  As in the $p \geq 1/2$ case, if either
  $U$ or $1 - U$ falls in $[0,p]$, then $X_1 X_2 = 0$ and so consider when
  $U \in [p,1-p]$.  

  Let $q = 1 - p$, $\alpha_i = 1 - q^i$, and $\beta_i = q^i$.  With this
  notation, $F_p(i) = \alpha_{i+1}$, and the pseudoinverse becomes
  \[
  F^{-1}_p(u) = \sum_{i=1}^\infty \ind(U \in [\alpha_i,\alpha_{i+1})).
  \]

  Note that 
  \[
  p = \alpha_1 < \alpha_2 < \cdots < \alpha_c \leq 1 - p,
  \]
  where $c = \lfloor \ln(p)/\ln(1 - p) \rfloor = 
         \lfloor \log_{1 - p}(p) \rfloor.$
  
  When $U \in [\alpha_{i},\alpha_{i+1})$, $X_1 = i$.  At the same time,
  when $1 - U \in [\alpha_{i},\alpha_{i+1})$, $X_2 = i$.  Hence there are at
  most $2c$ breakpoints changing the value of $X_1$ or $X_2$.  Therefore
  there are at most $2c$ different values of $(X_1,X_2)$ where one
  of the variables is not 0.  This makes it possible to compute
  $\mean(X_1X_2)$ in $O(c) = O(p^{-1}\ln(p^{-1}))$ time. For more details see the Appendix.
\end{proof}

\paragraph{Example: ${\mathbf{p=1/4}}$}
As an example of how this can be used to calculate the minimum
correlation, consider the case when $p = 1/4$. 

Here $c=\lfloor \ln(1/4)/\ln(3/4) \rfloor = 4$, 
and so the $\alpha_i$ and $\beta_i$ values for interval
$[1/4,3/4]$ become
\begin{center}
\begin{tabular}{rllll}
  & \multicolumn{4}{c}{$i$} \\
  & 1 & 2 & 3 & 4 \\
\midrule
$\alpha_i$ & $1/4 = 64/256$ & $112/256$ & $148/256$ & $175/256$ \\
$\beta_i$ & $3/4 = 192/256$ &  $144/256$ & $108/256$ & $81/256$ \\
\end{tabular}
\end{center}
Ordering the $\alpha_i$ and $\beta_i$ divides $[1/4,3/4]$ into
seven pieces:
\[
(x_1,x_2,\ldots,x_8) = 
 \left(\frac{65}{256},\frac{81}{256},\frac{108}{256},\frac{112}{256},
\frac{144}{256},\frac{148}{256},\frac{175}{256},\frac{192}{256}\right).
\]
The seven intervals are then
\begin{center}
\begin{tabular}{rlllllll}
Interval    & $[x_1,x_2]$ & $[x_2,x_3]$ & $[x_3,x_4]$ & $[x_5,x_6]$ & $[x_6,x_7]$ 
 & $[x_1,x_2]$ & $[x_1,x_2]$ \\
 \midrule
$(X_1,X_2)$ & $(1,4)$ & $(1,3)$ & $(1,2)$ & $(2,2)$ & $(2,1)$ & 
 $(3,1)$ & $(4,1)$
\end{tabular}
\end{center}
Hence
\[
\mean(X_1X_2) = 1 \cdot 4 \cdot \frac{81 - 65}{256} + 
  1 \cdot 3 \cdot \frac{108 - 81}{256} + \cdots
  + 4 \cdot 1 \cdot \frac{192 - 175}{256} = \frac{442}{256}
 \approx 1.7266,
\]
which gives a minimum correlation of $\rho_{-}(1/4) = -1862/3072 = -0.606.$

\begin{lemma}
  Let $\rho_{-}(p_1,p_2)$ be the minimum correlation achievable between
  $X_1 \sim \geo(p_1)$ and $X_2 \sim \geo(p_2)$.  Then
  it is possible to compute $\rho_{-}(p_1,p_2)$ in 
  $O(p_1^{-1} \ln(p_2^{-1}) + p_2^{-1} \ln(p_1^{-1}))$ steps.
\end{lemma}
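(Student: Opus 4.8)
The plan is to mimic the proof of the equal-parameter lemma, tracking the two parameters separately. As before, set $U \sim \unif([0,1])$, $X_1 = F_{p_1}^{-1}(U)$, and $X_2 = F_{p_2}^{-1}(1-U)$; by the Fr\'echet--Hoeffding bound this pair attains the minimum correlation, so the only nontrivial quantity to compute is $\mean(X_1 X_2)$. Write $q_j = 1 - p_j$ and $\alpha^{(j)}_i = 1 - q_j^i$ for $j \in \{1,2\}$. On the event $U < p_1$ we have $X_1 = 0$, and on the event $1 - U < p_2$, i.e. $U > 1 - p_2$, we have $X_2 = 0$; in either case $X_1 X_2 = 0$. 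So the product is nonzero only for $U$ in the interval $I = (p_1,\, 1 - p_2)$, which is nonempty precisely when $p_1 + p_2 < 1$.

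First I would bound the number of breakpoints inside $I$. The value of $X_1$ changes only at the points $\alpha^{(1)}_i$ that lie in $I$; since $\alpha^{(1)}_i < 1 - p_2$ forces $q_1^i > p_2$, i.e. $i < \ln(p_2)/\ln(q_1) = \log_{q_1}(p_2)$, there are at most $c_1 := \lfloor \log_{1-p_1}(p_2)\rfloor = O(p_1^{-1}\ln(p_2^{-1}))$ such breakpoints. Symmetrically, $X_2 = F_{p_2}^{-1}(1-U)$ changes value at the points where $1 - U$ crosses some $\alpha^{(2)}_i$, i.e. at $U = q_2^i = \beta^{(2)}_i$; the condition $\beta^{(2)}_i > p_1$ gives $i < \log_{q_2}(p_1)$, so there are at most $c_2 := \lfloor \log_{1-p_2}(p_1)\rfloor = O(p_2^{-1}\ln(p_1^{-1}))$ of these. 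Hence $I$ is partitioned into at most $c_1 + c_2 + 1$ subintervals on each of which $(X_1,X_2)$ is constant.

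Then I would argue that the computation can actually be carried out in this time bound. Merge the two sorted lists $\{\alpha^{(1)}_i \in I\}$ and $\{\beta^{(2)}_i \in I\}$ — both are already monotone, so a single linear merge in $O(c_1 + c_2)$ time produces the ordered breakpoints $p_1 = x_1 < x_2 < \cdots < x_m = 1 - p_2$ with $m \le c_1 + c_2 + 2$. On $[x_k, x_{k+1})$ the value of $X_1$ is the number of indices $i$ with $\alpha^{(1)}_i \le x_k$ and the value of $X_2$ is the number of indices $i$ with $\beta^{(2)}_i \ge x_{k+1}$; both counters are updated incrementally in $O(1)$ as we sweep from left to right. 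Thus
\[
\mean(X_1 X_2) = \sum_{k=1}^{m-1} X_1^{(k)} X_2^{(k)} \, (x_{k+1} - x_k)
\]
is computed in $O(c_1 + c_2)$ arithmetic operations. Finally, using the known moments $\mean(X_j) = (1-p_j)/p_j$ and $\var(X_j) = (1-p_j)/p_j^2$, we recover
\[
\rho_{-}(p_1,p_2) = \frac{\mean(X_1 X_2) - \mean(X_1)\mean(X_2)}{\sqrt{\var(X_1)\var(X_2)}}
\]
in $O(1)$ additional time, and when $p_1 + p_2 \ge 1$ the interval $I$ is empty so $\mean(X_1 X_2) = 0$ and the formula is immediate. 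The whole procedure therefore runs in $O(p_1^{-1}\ln(p_2^{-1}) + p_2^{-1}\ln(p_1^{-1}))$ steps.

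The only point requiring genuine care — as opposed to the routine bookkeeping above — is the asymmetric breakpoint count: one must resist the reflex of bounding the number of relevant $\alpha^{(1)}_i$ by $\log_{1-p_1}(p_1)$ (the bound from the equal-parameter case) and instead observe that the constraint comes from the \emph{other} endpoint of $I$, giving $\log_{1-p_1}(p_2)$, which is what produces the mixed-logarithm form of the running time. Everything else is a direct transcription of the $p_1 = p_2$ argument with the single uniform $U$ feeding one marginal forward and the other backward.
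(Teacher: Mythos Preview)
Your proof is correct and follows essentially the same route as the paper's own proof: reduce to computing $\mean(X_1X_2)$ via the antithetic integral, observe the integrand is a product of step functions whose breakpoints in the relevant interval $(p_1,1-p_2)$ number $O(p_1^{-1}\ln p_2^{-1})$ and $O(p_2^{-1}\ln p_1^{-1})$ respectively, then merge the two monotone lists in linear time. The paper is terser and omits the incremental-counter detail, but the argument is the same.
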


\begin{proof}
  The proof is essentially the same as for the previous lemma.  Since
  $\mean[X_1]$, $\mean[X_2]$, $\var(X_1)$, and $\var(X_2)$ are easy
  to calculate, the difficult part is finding $\mean[X_1 X_2]$ using
  antithetic random variables.  
  \par Let $\chi_1(u)=\lfloor \log_{1 - p_1}(1 - u)\rfloor$ and $\chi_2(u) = \lfloor \log_{1 - p_2}(u)\rfloor$ for $u \in [0,1]$. 
  Then note $X_1 = \chi_1(U)$
  and $X_2 = \chi_2(U)$, so
  \[
  \mean[X_1 X_2] = \int_0^1 \chi_1(u) \chi_2(u) \ du.
  \]
  Find the integral by breaking it into a sum, since 
  $\chi_1(u)$ and $\chi_2(u)$ are both step functions.

  When $p_1 + p_2 \geq 1$, then one of the $X_1$ and $X_2$ must be zero.
  Otherwise let $\alpha_i = 1 - (1 - p_1)^i$ for $i$ from 1 to  
  $d_2 = \lfloor \ln(p_2)/\ln(1 - p_1)\rfloor$.  Similarly, set
  $\beta_i = (1 - p_2)^i$ for $i$ from 1 to 
  $d_1 = \lfloor \ln(p_1)/\ln(1 - p_2)\rfloor$.  Note 
  $d_1 + d_2 = O(p_1^{-1} \ln(p_2^{-1}) + p_2^{-1} \ln(p_1^{-1}))$ and
  that the $\{\alpha_i\}$ and $\{\beta_j\}$ values can be merged and 
  sorted in linear time.
\end{proof}

Since for $X \sim \geo(p)$, $\mean[X] = O(p_1^{-1})$, this 
proves Theorem~\ref{THM:first}.

\section{Properties of the minimum correlation}
\label{SEC:properties}

In this section, the discontinuities of the partial derivatives of 
the $\rho_{-}(p_1,p_2)$ function are determined.

Recall that $\mean[X_1 X_2]$ is computed by breaking the interval
$[0,1]$ into subintervals using $0 \leq s_1 \leq s_2 \leq s_3 \leq \cdots s_n \leq 1$
where $(s_1,\ldots,s_n)$ are the sorted values (order statistics) of 
the $\{\alpha_i\}$ and $\{\beta_j\}$.  In particular $s_1=\alpha_1$ and $s_n=\beta_1$. Also, for convenience we will set $s_0=0$.
Let $f_1(m) = \max \{i:\alpha_i \leq s_m\}$,\ 
     {$f_2(m) = \max\{j:\beta_j \geq s_{m+1}\}$ }
     so 
that for all $u \in (s_m,s_{m+1})$, $(\chi_1(u), \chi_2(u)) = (f_1(m),f_2(m))$.
In this interval form:
\begin{equation}
\label{EQN:interval}
\mean[X_1 X_2] = \sum_{m=1}^{n-1} (s_{m+1} - s_m)f_1(m) f_2(m).
\end{equation}

\begin{lemma}
Fix $p_2$, and let 
$\bar p_1$ be a value  where there exists $i$ and $j$ such that
$\alpha_i = \beta_j$.  Then $\partial \rho_{-}/\partial p_1$ 
has a discontinuity at $\bar p_1$.
\end{lemma}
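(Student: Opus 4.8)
The plan is to trade the interval formula~(\ref{EQN:interval}) for the more transparent identity
\[
\mean[X_1 X_2] = \sum_{i \ge 1}\sum_{j \ge 1}(\beta_j - \alpha_i)^{+},
\]
which follows by writing $\chi_1(u) = \sum_{i\ge1}\ind(u \ge \alpha_i)$ and $\chi_2(u) = \sum_{j\ge1}\ind(u \le \beta_j)$, multiplying, and integrating term by term over $u \in [0,1]$ (every summand is nonnegative, so Tonelli applies). With $p_2$ fixed, each $\beta_j = (1-p_2)^j$ is a constant while $\alpha_i = 1 - (1-p_1)^i$ is a strictly increasing polynomial in $p_1$. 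A term $(\beta_j - \alpha_i)^{+}$ can be nonzero only when $\alpha_i < \beta_j$, which forces $p_1 \le \alpha_i < \beta_j \le 1-p_2$; in particular the hypothesis $\alpha_i = \beta_j$ already forces $\bar p_1 \in (0, 1 - p_2]$, and on a small enough neighborhood of $\bar p_1$ only finitely many terms of the double sum are nonzero. Hence near $\bar p_1$, $\mean[X_1X_2]$ is a finite sum of continuous functions, it is continuous, and all differentiation below is term by term and legitimate.

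Next I would examine a single term at $\bar p_1$. If $\alpha_i(\bar p_1) \ne \beta_j$, then $(\beta_j - \alpha_i)^{+}$ agrees with a polynomial on a neighborhood of $\bar p_1$ and contributes nothing to any discontinuity. If $\alpha_i(\bar p_1) = \beta_j$, then, because $\alpha_i$ is increasing in $p_1$, we have $\alpha_i < \beta_j$ for $p_1 < \bar p_1$ and $\alpha_i > \beta_j$ for $p_1 > \bar p_1$; so the term equals $\beta_j - \alpha_i$ just to the left of $\bar p_1$ and equals $0$ just to the right, giving left derivative $-i(1-\bar p_1)^{i-1} < 0$ and right derivative $0$. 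Letting $P$ be the (finite, nonempty) set of pairs $(i,j)$ with $\alpha_i(\bar p_1) = \beta_j$, summing shows that $\mean[X_1 X_2]$ is continuous at $\bar p_1$ while its derivative jumps:
\[
\lim_{p_1 \downarrow \bar p_1}\frac{d}{dp_1}\mean[X_1 X_2] \; - \; \lim_{p_1 \uparrow \bar p_1}\frac{d}{dp_1}\mean[X_1 X_2] \;=\; \sum_{(i,j)\in P} i\,(1-\bar p_1)^{i-1} \; > \; 0 .
\]
There is no cancellation even when several $\alpha_i$ meet several $\beta_j$ at the same $\bar p_1$, since $\alpha_i$ is increasing and so every one of these terms switches from positive to zero (never the reverse).

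Finally I would pass to $\rho_{-}$. Holding $p_2$ fixed, $\mean[X_2]$ and $\var(X_2)>0$ are constants and $\mean[X_1]=(1-p_1)/p_1$, $\var(X_1)=(1-p_1)/p_1^2>0$ are real-analytic on $(0,1)$, so near $\bar p_1$
\[
\rho_{-}(p_1,p_2) = c_1(p_1)\,\mean[X_1 X_2] + c_0(p_1), \qquad c_1(p_1) = \bigl[\var(X_1)\var(X_2)\bigr]^{-1/2} > 0 ,
\]
with $c_0,c_1$ real-analytic. Then $\partial \rho_{-}/\partial p_1 = c_1'\,\mean[X_1 X_2] + c_1\,(d/dp_1)\mean[X_1X_2] + c_0'$: the first and third terms are continuous at $\bar p_1$ because $\mean[X_1X_2]$ is, while the middle term inherits a jump of size $c_1(\bar p_1)\sum_{(i,j)\in P} i(1-\bar p_1)^{i-1} \ne 0$. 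Therefore $\partial \rho_{-}/\partial p_1$ is discontinuous at $\bar p_1$. The one thing that really needs care is the sign bookkeeping together with the remark that only finitely many product terms are active near $\bar p_1$ (so that continuity of $\mean[X_1X_2]$ and term-by-term differentiation come for free); the boundary case $\bar p_1 = 1-p_2$, in which only the pair $(1,1)$ can coincide, is identical with left and right derivatives $-1$ and $0$.
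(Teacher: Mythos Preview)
Your proof is correct and takes a genuinely different route from the paper. The paper works directly with the interval formula~\eqref{EQN:interval}: it differentiates $\mean[X_1X_2]$ with respect to each $\alpha_\ell$, invokes the chain rule to obtain $\partial\mean[X_1X_2]/\partial p_1 = -\sum_\ell (\partial\alpha_\ell/\partial p_1)\,f_2(m(\ell))$, and then argues that as $p_1$ crosses $\bar p_1$ the integer $f_2(m(i))$ drops from $j$ to $j-1$, forcing a jump. Your key move is instead the identity $\mean[X_1X_2]=\sum_{i,j}(\beta_j-\alpha_i)^+$, obtained from $\chi_1=\sum_i\ind(u\ge\alpha_i)$ and $\chi_2=\sum_j\ind(u\le\beta_j)$ (equivalently, the tail formula $\mean[X_1X_2]=\sum_{i,j}\prob(X_1\ge i,\,X_2\ge j)$). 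This replaces the bookkeeping of sorted breakpoints and the step functions $f_1,f_2$ by a locally finite sum of hinge functions $p_1\mapsto(\beta_j-\alpha_i(p_1))^+$, each of which is transparently $C^\infty$ except at its own kink; the jump in the derivative and its sign then fall out immediately, and the ``no cancellation'' observation is automatic because every kink bends the same way. The paper's approach has the advantage of yielding the compact closed form~\eqref{EQN:below} for the one-sided derivative; yours is shorter, avoids tracking the ordering of the $\alpha$'s and $\beta$'s, and makes the finiteness and continuity claims essentially trivial. One minor remark: your indicators disagree with the paper's pseudoinverse conventions on the measure-zero set $\{\alpha_i,\beta_j\}$, but as you only use them inside an integral this is harmless.
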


\begin{proof}
Since 
$\rho_{-}(p_1,p_2) = (\mean[XY] - \mean[X]\mean[Y])/\sqrt{\var(X_1)\var(X_2)}$
and $\mean[X]$ and $\var(X_1)$ are analytic in $p_1$ for $p_1 \in (0,1]$,
it suffices to show that $\partial \mean[XY]/\partial p_1$ is discontinuous
at $\bar p_1$.

Each $\alpha_\ell$ is the left endpoint of one subinterval, and the right
endpoint of another.  Hence for each $\ell$ there is an integer $m(\ell)$ such
that $\alpha_\ell = s_{m(\ell)}$.  Note that when 
$s_{m(\ell) - 1} < s_{m(\ell)} = \alpha_\ell < s_{m(\ell) + 1}$, a small
change in $\alpha_\ell$ does not change the interval structure.  
That means $f_1(m(\ell))$ and $f_1(m(\ell - 1))$ are 
constant under small changes in $\alpha_\ell$.
Only two terms in $\mean[X_1 X_2]$ depend on $\alpha_\ell$,
so $\partial \mean[X_1 X_2]/\partial \alpha_\ell$ is
\[
\frac{\partial}{\partial \alpha_\ell}
   \left[(s_{m(\ell) + 1} - \alpha_\ell)f_1(m(\ell))f_2(m(\ell)) + 
   (\alpha_\ell - s_{m(\ell) - 1})]f_1(m(\ell) - 1)f_2(m(\ell) - 1)\right]
\]
and since $s_{m(\ell)  + 1}$ and $s_{m(\ell) - 1}$ do not depend on $\alpha_\ell$:
\[
\frac{\partial \mean[X_1 X_2]}{\partial \alpha_\ell}
 = -f_1(m(\ell)) f_2(m(\ell)) + f_1(m(\ell) - 1) f_2(m(\ell) - 1).
\]
This holds for all $\ell$.  The chain rule then gives
\begin{align*}
\label{EQN:expderiv}
\frac{\partial \mean[X_1 X_2]}{\partial p_1}
 &= \sum_{\ell } - \frac{\partial \alpha_\ell}{\partial p_1} f_1(m(\ell)) f_2(m(\ell))
  + \frac{\partial \alpha_\ell}{\partial p_1} f_1(m(\ell) - 1) f_2(m(\ell) - 1).
\end{align*}

Since $s_{m(\ell)} = \alpha_\ell$, $f_1(m(\ell)) = \ell$ and
$f_1(m(\ell) - 1) = \ell - 1$.
Also, we know that $f_2(m(\ell)) = f_2(m(\ell) - 1)$ since the boundary between
the $m$ and $m - 1$ intervals is $\alpha_\ell$.  Hence
\begin{equation}
\label{EQN:below}
\frac{\partial \mean[X_1 X_2]}{\partial p_1} 
 = -\sum_\ell \frac{\partial \alpha_\ell}{\partial p_1} f_2(m(\ell)).
\end{equation}

So now consider $p_1$ only slightly smaller than $\bar p_1$.  Then
$\alpha_i < \beta_j$, and if $p_1$ is close enough to $\bar p_1$, then
$s_{m(i) + 1} = \beta_j$.  As $p_1$ increases past $\bar p_1$, 
$\alpha_i$ increases past $\beta_j$.  Then gives 
$f_2(m(i))$ a discontinuity, as now the situation is 
$\beta_j < \alpha_{i} = s_{m(i)} < \beta_{j-1}$.
So $f_2(m(i))$ jumps from $j$ for $p_1$ arbitrarily close to but smaller than
$\bar p_1$,
to $j - 1$ for $p_1$ arbitrarily close to but larger than $\bar p_1$.

Note that  {$\partial \alpha_\ell/\partial p_1 > 0$} for all $\ell$.  So   
there might
be other $\{i',j'\}$ pairs where $\alpha_{i'} = \beta_{j'}$, but this only
makes the discontinuous jump larger.

Hence $\partial \mean[X_1X_2]/\partial p_1$ has a discontinuous jump
at every $p_1$ value where there is at least one $\alpha_i = \beta_j$.
\end{proof}

Of course by symmetry a similar result holds for $p_2$.  A similar result
also holds for $\rho_{-}(p) = \rho_{-}(p,p)$.

\begin{lemma}
When there is an $\{i,j\}$ pair such that 
$1 - (1 - \bar p)^i = (1 - \bar p)^j,$ the derivative of 
$\rho_{-}(p)$ is discontinuous at $\bar p$.
\end{lemma}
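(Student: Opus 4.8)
The plan is to follow the reduction used in the preceding two lemmas: since $\mean[X]=(1-p)/p$ and $\var(X)=(1-p)/p^2$ are real-analytic and $\var(X)>0$ on $(0,1)$, and $\rho_{-}(p)=(\mean[X_1X_2]-\mean[X]^2)/\var(X)$, it suffices to show that $(d/dp)\mean[X_1X_2]$ is discontinuous at $\bar p$. The first step is to obtain a clean closed form for $\mean[X_1X_2]$ on the diagonal. Writing $q=1-p$, $\alpha_i=1-q^i$, $\beta_j=q^j$, and using $X_1=\chi_1(U)$, $X_2=\chi_2(U)$ with $\chi_1(u)=\lfloor\log_q(1-u)\rfloor=\sum_{i\ge1}\ind(u\ge\alpha_i)$ and $\chi_2(u)=\lfloor\log_q u\rfloor=\sum_{j\ge1}\ind(u\le\beta_j)$, I would apply Tonelli's theorem (all terms nonnegative) to get
\[
\mean[X_1X_2]=\int_0^1\chi_1(u)\chi_2(u)\,du=\sum_{i\ge1}\sum_{j\ge1}\operatorname{Leb}\bigl([\alpha_i,\beta_j]\bigr)=\sum_{i\ge1}\sum_{j\ge1}\bigl(q^i+q^j-1\bigr)^+,
\]
where $x^+=\max(x,0)$ and $\beta_j-\alpha_i=q^i+q^j-1$; finiteness is immediate from $\mean[X_1X_2]\le(\mean[X_1^2]\mean[X_2^2])^{1/2}$.

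Next I would note that for each fixed $q\in(0,1)$ the index set $S(q)=\{(i,j):q^i+q^j>1\}$ is finite (if $q^i+q^j>1$ then both $q^i>1-q$ and $q^j>1-q$, forcing $i,j<\log_q(1-q)$). Each equation $q^i+q^j=1$ has a unique solution $q^*(i,j)$, which lies in $[1/2,1)$ (taking $i\le j$ gives $2q^i\ge q^i+q^j=1$), and the only accumulation point of $\{q^*(i,j):i,j\ge1\}$ is $1$; hence $\bar q:=1-\bar p$ has a punctured neighbourhood containing no other solution. On each of the two half-neighbourhoods of $\bar q$ the set $S(q)$ is therefore constant, so there $\mean[X_1X_2]=\sum_{(i,j)\in S}(q^i+q^j-1)$ is a finite sum of analytic functions, and since $d/dp=-d/dq$,
\[
\frac{d}{dp}\mean[X_1X_2]=-\sum_{(i,j)\in S(q)}\bigl(iq^{i-1}+jq^{j-1}\bigr);
\]
in particular the one-sided derivatives of $\mean[X_1X_2]$ at $\bar p$ exist.

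It then remains to show that these one-sided limits differ. Because $q\mapsto q^i+q^j$ is strictly increasing (its derivative $iq^{i-1}+jq^{j-1}$ is positive), as $p$ increases through $\bar p$ — equivalently $q$ decreases through $\bar q$ — every pair $(i,j)$ with $\bar q^{\,i}+\bar q^{\,j}=1$ leaves $S(q)$ and no pair enters it. Hence
\[
\lim_{p\to\bar p^+}\frac{d}{dp}\mean[X_1X_2]-\lim_{p\to\bar p^-}\frac{d}{dp}\mean[X_1X_2]=\sum_{(i,j):\,\bar q^{\,i}+\bar q^{\,j}=1}\bigl(i\,\bar q^{\,i-1}+j\,\bar q^{\,j-1}\bigr)>0,
\]
the sum being over ordered pairs; the hypothesised pair alone contributes $2(\bar i\,\bar q^{\,\bar i-1}+\bar j\,\bar q^{\,\bar j-1})$ when $\bar i\ne\bar j$ and $2\bar i\,\bar q^{\,\bar i-1}$ when $\bar i=\bar j$. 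Since every summand is strictly positive, the contributions cannot cancel, so $(d/dp)\mean[X_1X_2]$, and with it $(d/dp)\rho_{-}(p)$, is discontinuous at $\bar p$.

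The step I expect to require the most care is precisely the last one, ruling out cancellation: the general-$p_1$ argument of the previous lemma tracked a single breakpoint $\alpha_\ell$ (or $\beta_\ell$), but on the diagonal $\alpha_i$ and $\beta_j$ move simultaneously and in opposite directions, so one cannot simply quote it. The closed form $\mean[X_1X_2]=\sum_{i,j}(q^i+q^j-1)^+$ makes the cancellation question trivial — $iq^{i-1}+jq^{j-1}$ is symmetric in $(i,j)$ and strictly positive — so the real content is establishing that closed form together with the (harmless) interchanges of sum, integral, and derivative. As a consistency check, $\bar q^{\,i}+\bar q^{\,j}=1$ with $i=j$ gives $\bar q=(1/2)^{1/i}$, and with $i\ne j$ it gives the remaining ``$1$ minus roots of $1/2$''-type values, all lying in $(0,1/2)$ in the $p$-coordinate, matching Figure~\ref{FIG:graph} and parts (1)--(2) of Theorem~\ref{THM:second}.
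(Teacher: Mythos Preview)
Your proof is correct and takes a cleaner, genuinely different route from the paper's. The paper simply writes ``The proof is similar to that of the previous lemma,'' meaning it would adapt the interval/breakpoint decomposition \eqref{EQN:interval} and the chain-rule computation leading to \eqref{EQN:below}. As you yourself observe, on the diagonal both the $\alpha_i$ and the $\beta_j$ move with $p$ (in opposite directions), so a faithful adaptation must track two families of moving breakpoints and then check that the jumps coming from the $\alpha$-side and the $\beta$-side reinforce rather than cancel; this is true but needs an extra argument. You sidestep all that bookkeeping by first deriving the closed form $\mean[X_1X_2]=\sum_{i,j\ge1}(q^i+q^j-1)^+$ via the layer-cake identities $\chi_1(u)=\sum_i\ind(u\ge\alpha_i)$, $\chi_2(u)=\sum_j\ind(u\le\beta_j)$ and Tonelli. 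From there the piecewise-polynomial structure in $q$ is explicit, and the positivity (hence non-cancellation) of the jump is immediate because each contributing term $iq^{i-1}+jq^{j-1}$ is symmetric in $(i,j)$ and strictly positive. The paper's approach has the advantage of recycling machinery already built for the asymmetric case; yours is self-contained and makes the sign of the jump transparent. One minor point worth spelling out: your claim that $1$ is the only accumulation point of $\{q^*(i,j)\}$ follows directly from the finiteness of $S(q_0)$ that you already established, since $q^*(i,j)\le q_0$ if and only if $q_0^i+q_0^j\ge 1$.
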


\begin{proof}
The proof is similar to that of the previous lemma.
\end{proof}

Consider the solutions to the equation of 
the previous Lemma.  For $x = (1 - \bar p)$, discontinuities
occur at the solutions to equations of the form 
\begin{equation}
\label{EQN:discontinuities}
x^j + x^i = 1.
\end{equation}

One simple family of 
solutions is all roots of $1/2$.  That is, setting $j = i$ and
$x = (1/2)^{1/i}$ gives a solution to~\eqref{EQN:discontinuities}.

The next set of solutions comes from $j = i + 1$, giving the equation
$x^i(1 + x) = 1$.  Since the solutions have $x$ close to 1, $1 + x$ is
close to $2$ and $x^i$ is close to $1/2$.  Since $1 + x$ is slightly
smaller than $2$, the solution $x$ is slightly larger than
$(1/2)^{1/i}$.

More generally, for any fixed $c$, a family of solutions is found with
$j = i + c$, with solution $x$ that is close to $(1/2)^{1/i}$.  The
following lemma makes this notion of closeness precise.

\begin{lemma}
The unique positive solution to $x^i(1 + x^c) = 1$ lies in the interval
\[
\left((1/2)^{1/i},(1/2)^{1/(i+c)}\right)
\]
for $i$ and $c$ positive.
\end{lemma}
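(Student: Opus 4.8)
The plan is to analyze the function $h(x) = x^i(1+x^c) = x^i + x^{i+c}$ on the interval $(0,1)$ and locate where it equals $1$. First I would observe that $h$ is continuous and strictly increasing on $(0,\infty)$, since both $x^i$ and $x^{i+c}$ are strictly increasing there (as $i$ and $c$ are positive); hence there is at most one positive solution to $h(x) = 1$. For existence and the interval bounds, I would simply evaluate $h$ at the two candidate endpoints and invoke the Intermediate Value Theorem together with strict monotonicity.

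At the left endpoint $x = (1/2)^{1/i}$ we have $x^i = 1/2$, so
\[
h\bigl((1/2)^{1/i}\bigr) = \tfrac12 + \tfrac12 \cdot (1/2)^{c/i} = \tfrac12\bigl(1 + (1/2)^{c/i}\bigr) < 1,
\]
using that $(1/2)^{c/i} < 1$ because $c/i > 0$. At the right endpoint $x = (1/2)^{1/(i+c)}$ we have $x^{i+c} = 1/2$, so
\[
h\bigl((1/2)^{1/(i+c)}\bigr) = (1/2)^{i/(i+c)} + \tfrac12 > \tfrac12 + \tfrac12 = 1,
\]
using that $(1/2)^{i/(i+c)} > 1/2$ because $i/(i+c) < 1$. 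Since $h$ is continuous and strictly increasing, and $h < 1$ at the left endpoint while $h > 1$ at the right endpoint, there is a unique $x$ in the open interval $\bigl((1/2)^{1/i},(1/2)^{1/(i+c)}\bigr)$ with $h(x) = 1$, and this is the unique positive solution to the equation. (Note that for $c$ a positive integer, which is the only case needed in the application, $x > 0$ forces $1 + x^c > 1 > 0$, so positivity of $x$ is automatic from $h(x) = 1$; more generally one restricts to $x > 0$ where the powers are defined.)

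There is essentially no obstacle here: the only mild point to be careful about is the direction of the two endpoint inequalities, i.e. checking that $(1/2)^{c/i} < 1$ and $(1/2)^{i/(i+c)} > 1/2$ come out with the signs claimed, which follows from comparing the exponents $c/i$ and $i/(i+c)$ to $1$. One could alternatively phrase the whole argument via the substitution $y = x^{1/(i+c)}$ or by taking logarithms, but the direct evaluation at the endpoints is the cleanest route and makes the strict inclusion in the open interval transparent.
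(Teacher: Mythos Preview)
Your proof is correct and follows essentially the same approach as the paper: evaluate $h(x)=x^i+x^{i+c}$ at the two endpoints and invoke the Intermediate Value Theorem. In fact your version is slightly more complete, since you explicitly justify uniqueness via strict monotonicity, whereas the paper's proof only verifies the endpoint inequalities and cites the IVT for existence.
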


\begin{proof}The function 
$f(x) = x^i(1 + x^c)$ is continuous in $x$ for $i$ and 
$c$ positive.  Note
\begin{align*}
f((1/2)^{1/i}) &= (1/2)(1 + (1/2)^{c/i}) < (1/2)(2) < 1 \\
f((1/2)^{1/(i+c)}) &= (1/2)^{i/(i+c)}(1 + (1/2)^{c/(i+c)}) = 
  (1/2)^{i/(i+c)} + (1/2) > 1.
\end{align*}
Hence the Intermediate Value Theorem guarantees a solution to 
$f(x) = 1$ for $x$ inside the interval.
\end{proof}

\section{Bounding $\mathbf \rho_{-}(p)$}

Using the antithetic generation of $X_1$ and $X_2$, it is 
possible to obtain bounds on $\rho_{-}(p_1,p_2)$.

\begin{lemma}
The minimum correlation satisfies
\[
\rho_{-}(p_1,p_2) \leq \frac{[p_1/\ln(1-p_1)][p_2/\ln(1-p_2)]}
 {\sqrt{(1 - p_1)(1 - p_2)}} \left(2 - \frac{\pi^2}{6}\right)
 - \sqrt{(1 - p_1)(1 - p_2)}.
\]
\end{lemma}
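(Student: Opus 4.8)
The plan is to reduce the inequality to a pointwise bound on the integrand of $\mean[X_1X_2]$ and then to evaluate one classical integral. First I would record the correlation algebra: by the Fr\'echet--Hoeffding bound (Lemma~\ref{THM:fhbound}) the minimum correlation is attained at the countermonotonic pair $(X_1,X_2) = (F_{p_1}^{-1}(U),F_{p_2}^{-1}(1-U))$, and inserting $\mean[X_i] = (1-p_i)/p_i$ and $\var(X_i) = (1-p_i)/p_i^2$ gives
\[
\rho_{-}(p_1,p_2) = \frac{p_1 p_2\,\mean[X_1X_2]}{\sqrt{(1-p_1)(1-p_2)}} - \sqrt{(1-p_1)(1-p_2)}.
\]
Since $p_1p_2/[\ln(1-p_1)\ln(1-p_2)] = [p_1/\ln(1-p_1)][p_2/\ln(1-p_2)]$, it therefore suffices to prove the single estimate $\mean[X_1X_2] \le (2 - \pi^2/6)/[\ln(1-p_1)\ln(1-p_2)]$; note the right-hand side is positive because both logarithms are negative.

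Next I would invoke the representation $X_1 = \chi_1(U)$, $X_2 = \chi_2(U)$ from Section~\ref{SEC:compute}, so that $\mean[X_1X_2] = \int_0^1 \chi_1(u)\chi_2(u)\,du$, and apply the elementary bound $\lfloor t\rfloor \le t$ together with the fact that $\chi_1,\chi_2$ are nonnegative on $(0,1)$ (each is the floor of a logarithm of a number in $(0,1)$ to a base in $(0,1)$). This yields, for every $u \in (0,1)$,
\[
0 \le \chi_1(u)\chi_2(u) \le \frac{\ln(1-u)}{\ln(1-p_1)}\cdot\frac{\ln u}{\ln(1-p_2)} = \frac{\ln(1-u)\,\ln u}{\ln(1-p_1)\ln(1-p_2)},
\]
where the right-hand side is nonnegative as a ratio of two products of negative quantities. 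Integrating over $[0,1]$ factors out the constant $1/[\ln(1-p_1)\ln(1-p_2)]$, so the claim reduces to showing $\int_0^1 \ln u\,\ln(1-u)\,du = 2 - \pi^2/6$.

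To finish I would evaluate that integral by expanding $\ln(1-u) = -\sum_{k\ge1} u^k/k$, interchanging sum and integral (legitimate after a sign flip, by monotone convergence, since all terms have the same sign), using $\int_0^1 u^k \ln u\,du = -1/(k+1)^2$, and then summing the series $\sum_{k\ge1} 1/[k(k+1)^2]$ by the partial fraction decomposition $1/[k(k+1)^2] = 1/k - 1/(k+1) - 1/(k+1)^2$; the first two pieces telescope to $1$ and the last contributes $-(\zeta(2)-1) = 1 - \pi^2/6$, for a total of $2 - \pi^2/6$. Substituting back into the displayed formula for $\rho_{-}$ gives exactly the stated bound. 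I expect the main obstacle to be purely bookkeeping: keeping the signs straight when floors are replaced by logarithms, so that the inequality points the right way. Everything else is routine, and the argument is uniform in $(p_1,p_2)$, so no case split between $p_1+p_2 \ge 1$ and $p_1+p_2 < 1$ is needed.
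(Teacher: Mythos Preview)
Your proposal is correct and follows essentially the same approach as the paper: bound the floors in $\mean[X_1X_2]=\int_0^1\chi_1(u)\chi_2(u)\,du$ by the underlying nonnegative logarithmic ratios, pull out the constant $[\ln(1-p_1)\ln(1-p_2)]^{-1}$, evaluate $\int_0^1\ln u\,\ln(1-u)\,du=2-\pi^2/6$ via the power series of $\ln(1-u)$ and $\zeta(2)$, and substitute into the correlation formula. Your write-up is in fact a bit more careful than the paper's on two points---the sign justification for $\lfloor a\rfloor\lfloor b\rfloor\le ab$ when $a,b\ge 0$, and the explicit partial-fraction computation of the series---but the argument is the same.
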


\begin{proof}
The minimum correlation between
$X_1$ and $X_2$ with $X_1 \sim \geo(p_1)$ and $X_2 \sim \geo(p_2)$
is determined by $\mean[X_1 X_2]$ and is found when
$X_1 = \chi_1(U)$ and 
$X_2 =\chi_2(U)$ (where $U \sim \unif([0,1])$).
Hence
\[
\mean[X_1 X_2] = \int_0^1 \left\lfloor \frac{\ln(1 - u)}{\ln(1 - p_1)} 
 \right\rfloor \left\lfloor \frac{\ln(u)}{\ln(1 - p_2)} \right\rfloor\ du 
\]

For any nonnegative $a$ and $b$, $\lfloor ab \rfloor \leq ab$, so 
\begin{align*}
\mean[X_1 X_2 ] &\leq
 \int_0^1 \frac{\ln(1 - u)\ln(u)}{\ln(1 - p_1)\ln(1 - p_2)} 
 &= [\ln(1 - p_1)\ln(1 - p_2)]^{-1}(2 - \pi^2/6)
\end{align*}
where $\int_0^1 \ln(1-u)\ln(u) \ du$ can be computed by considering
the power series expansion of $\ln(1 - u)$ and the value
for the Riemann zeta function at 2 (see for example \citealp{dukicm2013}).

When $X \sim \geo(p)$, $\mean[X] = (1 - p)/p$ and 
$\var(X) = \mean[X]^2/(1 - p)$.  Hence
\begin{align*}
\rho_{-}(p_1,p_2) &\leq \frac{[\ln(1 - p_1)\ln(1-p_2)]^{-1}
  (2 - \pi^2/6) - \mean[X_1]\mean[X_2]}
  {\mean[X_1]\mean[X_2]/\sqrt{(1 - p_1)(1 - p_2)}}.
\end{align*}
Simplifying then finishes the proof.
\end{proof}

The following lemma gives a feel for the behavior of $-p/\ln(1 - p)$.

\begin{lemma}
For $p \in (0,1/2]$,
\[
1 - (2 - \ln(2)^{-1}) p \leq \frac{-p}{\ln(1 - p)} \leq 
 1 - (1/2)p - (1/12)p^2.
\]
where $2 - \ln(2)^{-1} \approx 0.5573$.
\end{lemma}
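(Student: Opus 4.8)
The plan is to pass to the series expansion $-\ln(1-p)=\sum_{k\ge 1}p^k/k$, so that for $p\in(0,1)$
\[
\frac{-p}{\ln(1-p)}=\left(\sum_{m\ge 0}\frac{p^m}{m+1}\right)^{-1}=:\frac{1}{g(p)},
\]
with $g(p)>0$. For the upper bound I would show $g(p)\bigl(1-\tfrac12 p-\tfrac1{12}p^2\bigr)\ge 1$ by expanding the product and checking that every Taylor coefficient is nonnegative: the coefficient of $p^0$ is $1$, those of $p^1$ and $p^2$ vanish, and for $n\ge 3$ the coefficient of $p^n$ equals $\dfrac{1}{n+1}-\dfrac{1}{2n}-\dfrac{1}{12(n-1)}=\dfrac{(n-2)(5n-3)}{12n(n+1)(n-1)}>0$. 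Dividing the resulting inequality $g(p)\bigl(1-\tfrac12 p-\tfrac1{12}p^2\bigr)\ge 1$ by $g(p)$ then yields $-p/\ln(1-p)\le 1-\tfrac12 p-\tfrac1{12}p^2$ on all of $(0,1)$, which is more than required.

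The lower bound is the more delicate half. Writing $\gamma=2-\ln(2)^{-1}$, the first step is to record the identity $1-\gamma/2=\tfrac{1}{2\ln 2}$, which shows the asserted inequality is an \emph{equality} at $p=\tfrac12$ (both sides being $\tfrac{1}{2\ln 2}$). Since $\ln(1-p)<0$ on $(0,1)$, multiplying through by $\ln(1-p)$ reduces the claim $-p/\ln(1-p)\ge 1-\gamma p$ to
\[
\psi(p):=(1-\gamma p)\ln(1-p)+p\ \ge\ 0\qquad(0\le p\le\tfrac12).
\]
One checks directly that $\psi(0)=\psi(\tfrac12)=0$ and $\psi'(0)=0$, and a one-line computation gives $\psi''(p)=\dfrac{2\gamma-1-\gamma p}{(1-p)^2}$. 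Since $\tfrac12<\gamma<\tfrac23$, this is positive on $[0,p^*)$ and negative on $(p^*,\tfrac12]$, where $p^*:=2-\gamma^{-1}\in(0,\tfrac12)$.

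To finish I would split at $p^*$. On $[0,p^*]$ the function $\psi$ is convex with $\psi'(0)=0$, so it lies above its supporting line at $0$, i.e.\ $\psi\ge\psi(0)=0$; moreover $\psi''>0$ in the interior forces $\psi(p^*)>0$ strictly. On $[p^*,\tfrac12]$ the function $\psi$ is concave, hence lies above the chord joining $(p^*,\psi(p^*))$ and $(\tfrac12,0)$, which is nonnegative throughout $[p^*,\tfrac12]$; so $\psi\ge 0$ there as well. Combining the two pieces proves the lower bound. The main obstacle is exactly this change of concavity: no single global convexity argument works, so one must locate $p^*$ and use the convex part to certify $\psi(p^*)>0$ before invoking concavity on the upper part; and it is essential that $\psi$ \emph{vanish}, not merely stay small, at $p=\tfrac12$, which is precisely what the value $\gamma=2-\ln(2)^{-1}$ is arranged to guarantee.
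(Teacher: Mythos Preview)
Your proof is correct. Note, however, that the paper states this lemma without proof---it is offered only to ``give a feel for the behavior of $-p/\ln(1-p)$'' between the two lemmas bounding $\rho_-$---so there is nothing to compare against.

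For the record, both halves of your argument are sound. The upper bound via term-by-term positivity of the Cauchy product works because $g(p)=\sum_{m\ge 0}p^m/(m+1)$ converges absolutely on $(0,1)$, and your computation of the $p^n$ coefficient $\frac{(n-2)(5n-3)}{12n(n+1)(n-1)}$ is correct (it simplifies $\frac{1}{n+1}-\frac{1}{2n}-\frac{1}{12(n-1)}=\frac{5n^2-13n+6}{12n(n^2-1)}$). For the lower bound, your reduction to $\psi(p)=(1-\gamma p)\ln(1-p)+p\ge 0$ is valid since $\ln(1-p)<0$ reverses the inequality on multiplication, the endpoint and derivative checks $\psi(0)=\psi(\tfrac12)=\psi'(0)=0$ are right, and $\psi''(p)=(2\gamma-1-\gamma p)/(1-p)^2$ does change sign exactly once in $(0,\tfrac12)$ because $\tfrac12<\gamma<\tfrac23$. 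The convex/concave split is then the clean way to finish: convexity on $[0,p^*]$ with the tangent at $0$ equal to zero forces $\psi\ge 0$ there and $\psi(p^*)>0$ strictly, after which concavity on $[p^*,\tfrac12]$ bounds $\psi$ below by the nonnegative chord to $(\tfrac12,0)$.
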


To obtain a lower bound, first note, as in \cite{dukicm2013}, that 
\[
\frac{\int_0^1 \lambda_1^{-1}\lambda_2^{-1}
  \ln(u)\ln(1 - u) \ du - \lambda_1^{-1}\lambda_2^{-1}}
 {\lambda_1^{-1}\lambda_2^{-1}}
 = 1 - \pi^2/6 = -0.6449\ldots
\]
is the minimum correlation between any two exponentially distributed
random variables, no matter their rates!

It is well known that adding an exponential random variable of rate
$\lambda$ conditioned to lie in $[0,1]$
to a geometric with parameter $p = 1 - \exp(-\lambda)$
gives an exponential random variable with rate $\lambda$.  This can
be used to show the following.

\begin{lemma}
Let 
\[
g(p_1,p_2) =
 \frac{[p_1/\ln(1-p_1)][p_2/\ln(1-p_2)]}
 {\sqrt{(1 - p_1)(1 - p_2)}} \left(2 - \frac{\pi^2}{6}\right)
 - \sqrt{(1 - p_1)(1 - p_2)}. 
\]
The minimum correlation satisfies
\[
g(p_1,p_2) - \frac{1}{2}\sqrt{\frac{1-p_1}{1-p_2}} p_2
 - \frac{1}{2} \sqrt{\frac{1-p_2}{1-p_1}} p_1 
  \leq \rho_{-}(p_1,p_2) \leq g(p_1,p_2)
\]
\end{lemma}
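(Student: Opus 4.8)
Only the lower bound requires work, since $\rho_{-}(p_1,p_2)\le g(p_1,p_2)$ is exactly the content of the previous lemma (whose right-hand side is $g(p_1,p_2)$ once one substitutes $\ln(1-p_i)=-\lambda_i$). My plan for the lower bound is to push the \emph{known} value of the minimum correlation for exponentials, $1-\pi^2/6$, down to the geometric case via the decomposition ``geometric $+$ independent truncated exponential $=$ exponential'' recalled just above. The first move will be purely algebraic: writing $\lambda_i=-\ln(1-p_i)$, $Y_i\sim\expdist(\lambda_i)$, and $m_Y:=\min_{\text{couplings}}\mean[Y_1Y_2]=\int_0^1\lambda_1^{-1}\lambda_2^{-1}\ln u\,\ln(1-u)\,du=(2-\pi^2/6)/(\lambda_1\lambda_2)$, and using $\mean[X_i]=(1-p_i)/p_i$, $\var(X_i)=\mean[X_i]^2/(1-p_i)$ together with the formula for $g$, one checks that the asserted lower bound is equivalent to
\[
\min_{\text{couplings}}\mean[X_1X_2]\ \ge\ m_Y-\tfrac12\mean[X_1]-\tfrac12\mean[X_2].
\]

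To prove this I would lift the optimal coupling of the geometrics to the exponentials. By Fr\'echet--Hoeffding, $m_Y$ is attained at $\big(F_{Y_1}^{-1}(U),F_{Y_2}^{-1}(1-U)\big)$, $U\sim\unif([0,1])$; its integer parts are precisely $\big(F_{p_1}^{-1}(U),F_{p_2}^{-1}(1-U)\big)$, i.e.\ the countermonotonic — hence minimizing — coupling of $X_1\sim\geo(p_1)$ and $X_2\sim\geo(p_2)$. Writing $Y_i=X_i+W_i$ with $W_i=\{Y_i\}$ (by the recalled fact, marginally $W_i\sim\expdist(\lambda_i)$ conditioned to $[0,1]$ and $W_i$ is independent of $X_i$), in this single coupling
\[
\min_{\text{couplings}}\mean[X_1X_2]=m_Y-\mean[X_1W_2]-\mean[X_2W_1]-\mean[W_1W_2],
\]
so the target reduces to the one inequality $\mean[X_1W_2]+\mean[X_2W_1]+\mean[W_1W_2]\le\tfrac12\mean[X_1]+\tfrac12\mean[X_2]$ in the countermonotonic coupling; collecting a $\tfrac14$, this is the statement $\min_{\text{couplings}}\mean[(X_1+\tfrac12)(X_2+\tfrac12)]\ge m_Y+\tfrac14$, i.e.\ replacing the fractional parts $W_i$ (which have mean $\le\tfrac12$) by the constant $\tfrac12$ does not decrease the minimum product too much.

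The ingredients I would assemble to prove that inequality are: (i) $\mean[W_i]\le\tfrac12$, because $W_i$ has a decreasing density on $[0,1]$; (ii) negative-association bounds $\mean[X_1W_2]\le\mean[X_1]\mean[W_2]$ and $\mean[X_2W_1]\le\mean[X_2]\mean[W_1]$ — here $X_1=F_{p_1}^{-1}(U)$ is nondecreasing, while each level set $\{X_2=n\}$ is an interval on which $W_2$ is monotone and, being independent of $X_2$, has conditional mean $\mean[W_2]$, so $u\mapsto\int_0^u(W_2-\mean[W_2])$ vanishes at the endpoints of these intervals and keeps one sign, and an integration by parts against the monotone step function $X_1$ finishes it; and (iii) a \emph{quantitative sharpening} of (ii): a bound $\mean[X_1W_2]\le\mean[X_1]\mean[W_2]-c$ with $c>0$ of order one, obtained from the explicit interleaving of the jump points of $U\mapsto X_1(U)$ and the sawtooth $U\mapsto W_2(U)$ near $U=1$, where under countermonotonicity $W_2$ is forced to be small precisely where $X_1$ is large. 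Finally, the boundary region $p_1+p_2\ge1$ — where the minimizing coupling forces one of $X_1,X_2$ to vanish, so the left side is $0$ — I would dispatch directly, using $m_Y\le\mean[Y_1]\mean[Y_2]=1/(\lambda_1\lambda_2)$, the bound $p_i/(-\ln(1-p_i))\le1-\tfrac12p_i$ (available from the recalled lemma on $-p/\ln(1-p)$, e.g.\ via $\ln x\le 2(x-1)/(x+1)$), and $2-\pi^2/6<\tfrac12$.

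The hard part will be step (iii). Items (i) and (ii) are routine, but by themselves they are not enough: combining them only gives $\mean[X_iW_j]\le\tfrac12\mean[X_i]$, which leaves no room for $\mean[W_1W_2]$, and indeed as $p_1,p_2\to0$ both sides of the inequality to be proved behave like $p^{-1}+O(1)$, with equality at order $p^{-1}$ — so the estimate must be correct at the $O(1)$ level, exactly where generic inequalities fail ($\mean[W_1W_2]\le\mean[W_1]\mean[W_2]$ is essentially tight there and gives nothing, and the crude $\mean[X_1W_2]\le\mean[X_1]\mean[W_2]$ overshoots by a fixed positive amount). The crux is therefore a $p$-uniform computation of the cross terms that exploits the precise interval/sawtooth geometry of the countermonotonic coupling of the two exponentials, so that the ``deficits'' $\tfrac12-\mean[W_i]$ in (i)--(ii) together with the negative-association defect in (iii) exactly absorb $\mean[W_1W_2]$.
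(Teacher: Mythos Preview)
Your reduction to the inequality
\[
\mean[X_1X_2]\ \ge\ m_Y-\tfrac12\mean[X_1]-\tfrac12\mean[X_2]
\]
is correct, and so is the observation that in the countermonotonic exponential coupling the integer parts are exactly the countermonotonic geometrics. But from that point on you have made your life needlessly hard, and the gap is real: your step (iii) is not proved, you yourself flag that (i)--(ii) are insufficient by an $O(1)$ amount, and the ``quantitative sharpening'' you describe is genuinely delicate because in your coupling the fractional parts $W_1,W_2$ are \emph{dependent} on $(X_1,X_2)$ and on each other. You are trying to control three correlated cross terms simultaneously to within a constant, uniformly in $p$; nothing in your outline actually does this.

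The paper sidesteps all of it with one idea you are missing: do not decompose the countermonotonic exponentials. Instead, start from the countermonotonic geometrics $(X_1,X_2)$ and \emph{add fresh independent} truncated exponentials $A_i\sim\expdist(\lambda_i\mid[0,1])$, with $A_1,A_2$ independent of each other and of $(X_1,X_2)$. By the very fact you recalled, $X_i+A_i\sim\expdist(\lambda_i)$ marginally, so $(X_1+A_1,\,X_2+A_2)$ is \emph{some} coupling of the two exponentials --- not the countermonotonic one, but that is irrelevant for a lower bound: its correlation is still at least $1-\pi^2/6$, hence $\mean[(X_1+A_1)(X_2+A_2)]\ge m_Y$. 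Now independence makes every cross term factor,
\[
\mean[X_1X_2]\ \ge\ m_Y-\mean[A_1]\mean[X_2]-\mean[A_2]\mean[X_1]-\mean[A_1]\mean[A_2],
\]
and $\mean[A_i]\le\tfrac12$ (decreasing density on $[0,1]$) finishes it. No negative-association arguments, no interval/sawtooth geometry, no step (iii). The whole difficulty you diagnosed --- that generic inequalities leave no room for $\mean[W_1W_2]$ --- evaporates because with independent $A_i$ the analogous term is exactly $\mean[A_1]\mean[A_2]\le\tfrac14$, a harmless constant.

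In short: you went from exponentials down to geometrics; the paper goes from geometrics up to exponentials. The second direction lets you \emph{choose} the noise to be independent, and that is the entire proof.
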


\begin{proof}
For $X_1 \sim \geo(p_1),\ X_2 \sim \geo(p_2),$ let
\[
 A_1 \sim \expdist(-\ln(1 - p_1)|A_1 \in [0,1]),\ 
 A_2 \sim \expdist(-\ln(1 - p_2)|A_2 \in [0,1]),
\]
where $A_1$ and $A_2$ are independent of $(X_1,X_2)$ and each other.
Then $X_i + A_i \sim \expdist(-\ln(1-p_i))$ for $i \in \{1,2\}$, and 
so $\cor(X_1 + A_1,X_2 + A_2) \geq 1 - \pi^2/6.$

Solving the correlation for the mean of the product gives:
\[
\mean[(X_1 + A_1)(X_2 + A_2)] \geq (2 - \pi^2/6)\ln(1-p_1)\ln(1-p_2).
\]
So
\[
\mean[X_1 X_2] \geq -\mean(A_1)\mean(X_2) - \mean(A_2)\mean(X_1)
 - \mean(A_1)\mean(A_2) + 
 (2 - \pi^2/6)\ln(1-p_1)\ln(1-p_2).
\]
Since $\mean(A_1)$ and $\mean(A_2)$ are both at most $1/2$, this
gives
\[
\mean[X_1 X_2] \geq - (1/2)\mean(X_2) - (1/2)\mean(X_1)
 - 1/4 + 
 (2 - \pi^2/6)\ln(1-p_1)\ln(1-p_2).
\]
which in turn gives the result.
\end{proof}

Theorem~\ref{THM:second} then follows easily.

\section{Appendix}
\label{SEC:appendix}

Here we carry out
in greater detail the calculation of $\rho_{-}(p)$ $(p_1=p_2=p)$ 
that is used to generate Figure \ref{FIG:graph}.
 
 Consider $1/2 \in [p,1-p]$.
Let $k$ be such that $\alpha_k \leq 1/2 <\alpha_{k+1}$. 
That implies $q^{k+1}<1/2 \leq q^k$ and 
$k \leq \log_{q}(1/2) < k+1$.  Since $k$ is an integer,
$k = \lfloor \log_q(1/2)\rfloor.$

To avoid accumulation of superscripts let $r_i = (1/2)^{1/i}$, 
the $i$th root of $1/2$. Then $r_i \leq q < r_{i+1}$ gives $k=i$, 
so as a function of $q$, $k$ is a step-function whose value  increases by one at the roots of 1/2.

\begin{figure}[h]
\begin{tikzpicture}
\draw[line width=1.5pt] (0,0) -- (13, 0) ;

\draw (0, -0.1) -- (0, 0.1) node [above] {$\alpha_1$};
\draw (1, -0.1) -- (1, 0.1) node [above] {$\beta_{c_1}$};
\draw (2.2, -0.1) -- (2.2, 0.1) node [above] {$\beta_{c_1-1 }$};
 \draw (2.9,0.2) node [above] {$\ldots$};
 \draw (3.7, -0.1) -- (3.7, 0.1) node [above] {$\beta_{c_2 + 1 }$};
\draw (4.9, -0.1) -- (4.9, 0.1) node [above] {$\beta_{c_2 }$};

\draw (9.7, -0.1) -- (9.7, 0.1) node [above] {$\beta_{k+1}$};

\draw (11.3,-0.1)--(11.3,0.1) node [above] {$\frac{1}{2}$};
\draw (12.1,-0.1)--(12.1,0.1) node [above] {$\beta_k$};
\draw[line width=1.5pt] (0,-1) -- (13, -1);
\draw (0, -0.9) -- (0, -1.1) node [below] {$\alpha_1$};
\draw (4.3, -0.9) -- (4.3, -1.1) node [below] {$\alpha_2$};
\draw (8,-1) node [below] {$\cdots$};
\draw (10.5, -0.9) -- (10.5, -1.1) node [below] {$\alpha_k$};
\draw (11.3,-0.9)--(11.3,-1.1) node [below] {$\frac{1}{2}$};
\draw (12.9, -0.9) -- (12.9, -1.1) node [below] {$\alpha_{k+1}$};
\draw[dotted] (1,0)--(1,-2.1);
\draw[dotted] (2.2,0)--(2.2,-2.1);
\draw[dotted] (3.7,0)--(3.7,-2.1);
\draw[dashed] (4.3,0)--(4.3,-2.1);

\draw[dotted] (4.9,0)--(4.9,-2.1);
\draw[dotted] (9.7,0)--(9.7,-2.1);
\draw[dashed] (10.5,0)--(10.5,-2.1);
\draw (11.3,0)--(11.3,-2.1);
\draw[line width=1pt] (0,-2) -- (13, -2);
\draw (0.4,-2.2) node[below] {\scriptsize {$1\cdot c_1$}};
\draw (1.6,-2.2) node[ below] {\scriptsize{$ 1(c_1-1)$}};
\draw (3,-2.2) node[below] {\scriptsize{$\cdots$}};
\draw (4,-2.2) node[below] {\scriptsize{$ 1  c_2$}};
\draw (4.6,-2.2) node[below] {\scriptsize{$2 c_2$}};
\draw (5.6,-2.2) node[below] {\scriptsize{$2 (c_2-1)$}};
\draw (8,-2.2) node[below] {\scriptsize{$\cdots$}};
\draw (10,-2.2) node[below] {\scriptsize{$(k-1) k$}};
\draw (10.9,-2.2) node[below] {\scriptsize{$k \cdot k$}};
\end{tikzpicture}
\caption{$\alpha_i$, $\beta_i$, $k$, and $c_i$ over $[p=\alpha_1,1/2]$ and
slightly beyond.  The
bottom row represents the value of $F_p^{-1}(U)F_p^{-1}(1 - U)$ in
each subinterval.}
\label{FIG:twolines}
\end{figure}
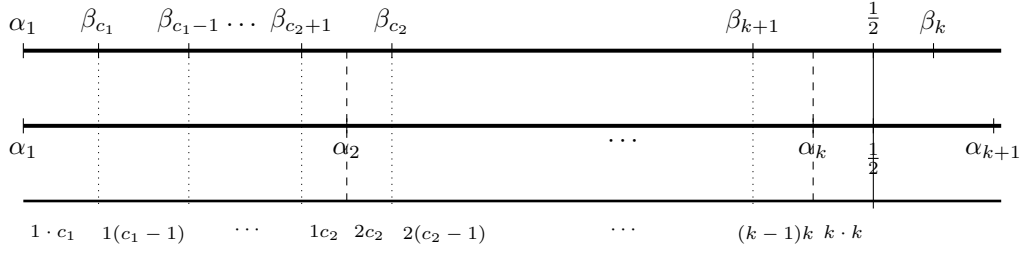


For $i \in \{1,\ldots,c\}$, let 
$c_i$ be the index such that 
$\beta_{c_i+1}< \alpha_i \leq \beta_{c_i}$. 
Then $c_i=[\log_{q}(1-q^i)]$ (see Figure \ref{FIG:twolines}.)
The mean product of a geometric and its antithetic counterpart can
be written
\begin{align} \label{sum1}
\mean(F_p^{-1}(U) F_p^{-1}(1-U)) = 2 \sum_{i=1}^{k} i L_i
\end{align}
where for $i=1,\ldots,k-1$
\begin{align*}
L_i=  \Big{(} |(\alpha_i, \beta_{c_i})| c_i + 
  |(\beta_{c_{i}},\beta_{c_i -1})| (c_i -1) + \cdots 
+ |(\beta_{c_{i+1}+1},\alpha_{i+1})| c_{i+1} \Big{)}.
\end{align*}
[Here $|(a,b)| = b - a$ denotes the width of the interval.]

When $i = k$ there are three cases
\begin{enumerate}
\item[Case 1.] $\beta_{k+1} \leq \alpha_k \leq 1/2$, so $L_k=  |(\alpha_k,\frac{1}{2})| k^2$ 
since $c_k = k$.  Then
\begin{align*}
L_k= k^2(q^k - 1/2).
\end{align*}
\item[Case 2.] $\beta_{k+2} \leq \alpha_k \leq  \beta_{k+1}  \leq 1/2$ so
$L_k=|(\alpha_k,\beta_{k+1})| k (k+1) +|(\beta_{k+1},\frac{1}{2})| k^2$
since $c_k = k + 1$. 
Then
\begin{align*}
L_k=k^2(q^k-1/2)+k(q^{k+1}-1+q^k).
\end{align*}
\item[Case 3.] $\alpha_k \leq \beta_{k+2}<\beta_{k+1} \leq 1/2$:  Here it is the case that 
$c_k=k+2$ and
$L_k=|(\alpha_k,\beta_{k+2})| k (k+2) + |(\beta_{k+2},\beta_{k+1})| k(k+1) +|(\beta_{k+1},\frac{1}{2})| k^2.$   Then
\begin{align*}
L_k= k^2(q^k - 1/2)+k(q^{k+2}+ q^{k+1}+2q^{k}-2).
\end{align*}
\end{enumerate} 
These three cases exhaust the possibilities.

\begin{lemma}
The set of $\beta_i$ values in $[\alpha_k,1/2]$ is either $\emptyset$,
$\{\beta_{k+1}\}$, or $\{\beta_{k+1},\beta_{k+2}\}$.
\end{lemma}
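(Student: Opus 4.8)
The plan is to show that the only indices that can contribute a $\beta_i$ to $[\alpha_k,1/2]$ are $i=k+1$ and $i=k+2$, and then to read off the three possibilities from the monotonicity of $i\mapsto\beta_i$. Write $q=1-p\in(1/2,1)$, so $\beta_i=q^i$ is strictly decreasing in $i$, and recall $k$ is characterised by $\beta_{k+1}=q^{k+1}<1/2\le q^k=\beta_k$. (If $q^k=1/2$ then $q$ is a root of $1/2$, the interval $[\alpha_k,1/2]$ collapses to the point $\{1/2\}$, and there is nothing to prove; so assume $q^k>1/2$, i.e.\ $\beta_k>1/2$.) Since $\beta_k>1/2$ and $\beta$ is decreasing, every $\beta_i$ with $i\le k$ satisfies $\beta_i\ge\beta_k>1/2$ and hence lies outside $[\alpha_k,1/2]$; so only $i\ge k+1$ need be considered.

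The crux is the inequality
\[
\beta_{k+3}<\alpha_k,
\]
equivalently $q^{k+3}<1-q^k$, equivalently $q^k(1+q^3)<1$. Granting it, strict monotonicity gives $\beta_i\le\beta_{k+3}<\alpha_k$ for all $i\ge k+3$, so those indices are excluded too, leaving only $i\in\{k+1,k+2\}$; both of these already satisfy $\beta_i\le\beta_{k+1}<1/2$, so membership in $[\alpha_k,1/2]$ is decided by whether $\beta_i\ge\alpha_k$. Since $\beta_{k+1}>\beta_{k+2}$, the set of $\beta_i$ in $[\alpha_k,1/2]$ is $\emptyset$ if $\beta_{k+1}<\alpha_k$, is $\{\beta_{k+1}\}$ if $\beta_{k+2}<\alpha_k\le\beta_{k+1}$, and is $\{\beta_{k+1},\beta_{k+2}\}$ if $\alpha_k\le\beta_{k+2}$; these cases are exhaustive, which is the claim. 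To prove $q^k(1+q^3)<1$ I would split on $k$. If $k\ge2$ then $q\ge r_2=(1/2)^{1/2}$, which exceeds the positive root $(\sqrt{5}-1)/2$ of $q^2+q-1$; using $q^{k+1}<1/2$ in the form $q^k<1/(2q)$ yields $q^k(1+q^3)<(1+q^3)/(2q)$, and $(1+q^3)/(2q)<1$ since $q^3-2q+1=(q-1)(q^2+q-1)<0$ on $((\sqrt{5}-1)/2,1)$. If $k=1$ then $q<r_2=(1/2)^{1/2}$, so $q^2<1/2$ and thus $\beta_4=q^4<1/4<1-(1/2)^{1/2}<1-q=\alpha_1$, i.e.\ $\beta_{k+3}<\alpha_k$ outright.

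The main obstacle is this key inequality. A single estimate that uses only $q^{k+1}<1/2$ degrades to $q^k(1+q^3)<(1+q^3)/(2q)$, whose right-hand side tends to $9/8>1$ as $q\downarrow1/2$, so it is not by itself enough; the fix is the observation that a small $q$ forces a small $k$ (in fact $k=1$ once $q<(1/2)^{1/2}$), which lets that regime be handled by the separate elementary bound, while $k\ge2$ automatically keeps $q$ bounded away from $1/2$ so the clean estimate goes through.
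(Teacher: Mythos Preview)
Your proof is correct and follows essentially the same route as the paper: reduce the claim to the inequality $\beta_{k+3}<\alpha_k$, equivalently $q^{k+3}+q^k<1$, and then extract the cubic $q^3-2q+1=(q-1)(q^2+q-1)<0$ on the relevant range. The only cosmetic difference is that the paper treats all $k\ge1$ uniformly by evaluating $g(x)=x^{k+3}+x^k-1$ at the right endpoint $r_{k+1}=(1/2)^{1/(k+1)}$ and invoking monotonicity of $g$, whereas you split off $k=1$ and handle $k\ge2$ via the bound $q^k<1/(2q)$; both paths land on the same cubic and the same conclusion.
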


\begin{proof} It suffices to show that 
$\beta_{k+3}< \alpha_k$ which is equivalent to
$q^{k+3} < 1- q^k$. As before, let $r_i = (1/2)^{1/i}$.  
Consider the function $g(x) = x^{i+3}+x^i-1$ on the interval
$(r_i, r_{i+1})$; we shall show that $g(x)$ 
is negative there. At $r_{i+1}$:
\begin{align*}
g(r_{i+1}) &= (1/2)^{(i + 3)/(i+1)} + 
             (1/2)^{i/(i+1)}  - 1  \\
 &= (1/2)[r_{i+1}^2 + r_{i+1}^{-1} - 2] = (2 r_{i+1})^{-1}[r_{i+1}^3 + 1 - 2r_{i+1}].
\end{align*}
Now we observe that $x^3-2x+1< 0$ for $x \geq r_1$ 
and therefore $g(r_{i+1}) < 0$. 
Since $g'(x) = {i+3}x^{i+2} + i x^{i-1} > 0$, $g$ is an increasing function on 
$(r_i, r_{i+1})$ which means the the function is negative on the entire 
interval.

So between $\alpha_k$ and $1/2$ one finds either
$\beta_{k+1}$, \{$\beta_{k+1} , \beta_{k+2}$\} or no $\beta_i$ values.
\end{proof}

%
 
\vskip5mm
\par Going back to \eqref{sum1}, for $i=1,\ldots,k-1$ we have
\begin{align*}
L_i&= \Big{(} (\beta_{c_i} -\alpha_i) c_i +  
      (\beta_{c_i - 1} - \beta_{c_i}) (c_i - 1) + \cdots + {} \\
&\hspace*{1.5em} (\beta_{c_{i+1}+1}-\beta_{c_{i+1}+2}) (c_{i+1}+1)+(\alpha_{i+1}-\beta_{c_{i+1}+1})c_{i+1}\Big{)} \\
 &= c_i (\beta_{c_i}-\alpha_i + \beta_{c_i-1}-\beta_{c_i} + \cdots +
    \alpha_{i+1}-\beta_{c_{i+1}+1}) - 1\cdot(\beta_{c_i-1} - \beta_{c_i}) - {} \\
 &\hspace*{1.5em} 2\cdot(\beta_{c_i -2}-\beta_{c_i-1}) - \cdots 
  - (c_i-c_{i+1})(\alpha_{i+1}-\beta_{c_{i+1}+1})
 \end{align*}
which in terms of $q$ is
 \begin{align*}
L_i &= c_i(\alpha_{i+1}-\alpha_i) - 
    (c_i-c_{i+1}) \alpha_{i+1}-\beta_{c_{i+1}+1}(\beta_{c_i-c_{i+1}} + 
    \beta_{c_i-c_{i+1}-1} + \cdots + 1) \\
      &= c_i( q^i - 1) -c_{i+1}(q^{i+1}-1)+ \frac{q^{c_{i+1}+1}-q^{c_i+1}}{1-q}.
\end{align*}
We can rewrite the sum in \eqref{sum1} as
\begin{align} \label{sumL}
\sum_{i=1}^{k-1} i L_i = \sum_{i=1}^{k-1}  L_i +\sum_{i=2}^{k-1} L_i+ \cdots +  
\sum_{i=k-2}^{k-1}  L_i+ L_{k-1}.
\end{align}
Many terms in $\sum_{i=1}^{k-1} L_i$ cancel:
\begin{align*}
\sum_{i=1}^{k-1}  L_i &= c_1(q-1)- c_2(q^2-1) + 
                       \frac{q^{c_{2}+1}-q^{c_1+1}}{1-q} + {}\\
     &\hspace*{1.5em} c_2(q^2-1)- c_3(q^3-1) +
      \frac{q^{c_{3}+1}-q^{c_2+1}}{1-q} + \cdots +  \\
     &\hspace*{1.5em} 
       c_{k-1}(q^{k-1}-1)- c_k(q^k-1) +\frac{q^{c_{k}+1}-q^{c_{k-1}+1}}{1-q}  \\
     &= c_1(q-1)- c_k(q^k-1) + \frac{q^{c_{2k}+1}-q^{c_1+1}}{1-q}.
\end{align*}
In general, when the sum starts at $j$:
\begin{align*}
 \sum_{i=j}^{k-1} L_i = c_j(q^j-1)- c_k(q^k-1) + \frac{q}{1-q}(q^{c_k}-q^{c_j}); ~~j=1,2,\ldots,k-1.
 \end{align*}
 Now the sum in \eqref{sumL} becomes
\begin{align*}
\sum_{i=1}^{k-1} i L_i &= 
  \sum_{i=1}^{k-1} c_i(q^i-1)- c_k(q^k-1) + \frac{q}{1-q}(q^{c_k}-q^{c_i})\\
  &= -(k-1)c_k (q^k-1) + (k-1)\frac{q^{c_k+1}}{1-q} + 
     \sum_{i=1}^{k-1} c_i(q^i-1)- \frac{q^{c_i+1}}{1-q}.
\end{align*}

Finally
\begin{align*}
\frac{1}{2}\mean(F_p^{-1}(U) F_p^{-1}(1-U)) &= \sum_{i=1}^{k-1} c_i (q^i - 1) -  
  \frac{q^{c_i+1}}{1-q} + R(q,k), 
\end{align*}
where $R(q,k)$ equals
\begin{equation*}
\begin{array}{ll}
  k^2/2 +k(q^k-1+ q^{k+1}/(1-q)) - q^{k+1}/(1-q) & \quad 
     \text{in case 1,} \\
  k^2/2+k(q^{k+1}+q^k-1+q^{k+2}/(1-q)+q^k-1-q^{k+2}/(1-q)) & 
    \quad \text{in case 2,} \\
  k^2/2+k(q^{k+2}+ q^{k+1}+q^k -1 + q^{k+3}/(1-q))+2(q^k-1) - 
    q^{k+3}/(1-q) & \quad \text{in case 3.}
  \end{array}
\end{equation*}



\bibliographystyle{alea3}
\bibliography{alea1}

\end{document}